\theoremstyle{plain}
\newtheorem{proposition}{Proposition}
\newtheorem{lemma}[proposition]{Lemma}
\newtheorem{theorem}[proposition]{Theorem}
\theoremstyle{definition}
\newtheorem{example}[proposition]{Example}
\newtheorem{remark}[proposition]{Remark}
\title{Constant $2$-labellings and an application to $(r,a,b)$-covering codes}
\author[1]{Sylvain Gravier}
\author[1,2]{\'Elise Vandomme}
\affil[1]{University of Grenoble, France}
\affil[2]{University of Liege, Belgium}
\begin{document}

\maketitle

\begin{abstract}
	We introduce the concept of constant $2$-labelling of a weighted graph and show how it can be used to obtain perfect weighted coverings. Roughly speaking, a constant $2$-labelling of a weighted graph is a $2$-colouring of its vertex set which preserves the sum of the weights of black vertices under some automorphisms. We study this problem on four types of weighted cycles.  Our results on cycles allow us to determine $(r,a,b)$-codes in $\mathbb{Z}^2$ whenever $|a-b|>4$, $r\ge2$ and we give the precise values of $a$ and $b$.  This is a refinement of Axenovich's theorem proved in 2003. 
\end{abstract}

\section*{Introduction}
Constant $2$-labellings are particular $2$-colourings of weighted graphs. For every composition of the colouring with an automorphism of a given group, the sum of the weights of the black vertices must be equal to a constant that depends on the colour of a given particular vertex.

The motivation about introducing constant $2$-labellings comes from covering problems in graphs. 
Covering and packing problems are traditional issues in graph theory. A natural packing problem in a graph is to determine the maximal number of non-intersecting identical balls that can be placed in the graph; a covering problem is for example to determine the minimal number of $r$-balls that can be placed in such a way that every vertex of the graph is contained in at least one of them. Packing problems are fundamental in ``error correction'' while covering problems have application in mobile network. See~\cite{Cohen--Honkala--Litsyn--Lobstein--1997} for many bibliographic pointers. 

We consider in this paper coverings with balls of constant radius that satisfy multiplicity conditions. For positive integers  $r, a, b$, an \emph{$(r,a,b)$-covering code} or simply \emph{$(r,a,b)$-code} of a graph $G=(V,E)$ is a set $S\subseteq V$ of vertices such that every element of $S$ belongs to exactly $a$ balls of radius $r$ with elements of $S$ as centers and every element of $V\setminus S$ belongs to exactly $b$ balls 
of radius $r$ with elements of $S$ as centers. We can view an $(r,a,b)$-code as a particular colouring $c$ with two colors, black and white, where the black vertices are the elements of the code. Such codes are also known as $(a,b)$-codes of radius $r$~\cite{Gravier--Lacroix--Slimani--2013}, as $(r,a,b)$-isotropic colourings~\cite{Axenovich--2003} or as perfect colourings~\cite{Puzynina--2009}.

The notion of $(r,a,b)$-codes generalizes the notion of domination and perfect codes in graphs. Perfect codes were introduced in terms of graphs by Biggs~\cite{Biggs--1973}. An \emph{$r$-perfect code} of a graph $G=(V,E)$ is a subset $C\subseteq V$ with the property that each vertex is within distance $r$ of exactly one vertex of $C$. Hence, an $r$-perfect code is an $(r,1,1)$-code.  
Kratochv\'il~\cite{Kratochvil--1988} showed that the problem of finding an $r$-perfect code in  graphs is NP-complete. 

Perfect codes have also been studied in infinite graphs. For example, Golomb and Welsh \cite{Golomb--Welch--1968,Golomb--Welch--1970} considered the multidimensional rectangular grid $\mathbb{Z}^d$. 
They proved the existence of $1$-perfect codes, i.e., $(1,1,1)$-codes, in $\mathbb{Z}^d$. Such codes can be considered as periodic tilings of the grid $\mathbb{Z}^n$ by balls of radius $1$. Moreover, the authors conjectured that there do not exist $r$-perfect codes with $r> 1$ in $\mathbb{Z}^d$~\cite{Golomb--Welch--1968,Golomb--Welch--1970}. Horak~\cite{Horak--2009} wrote a survey of results that support the conjecture. For more information about perfect codes, see~\cite[Chapter 11]{Cohen--Honkala--Litsyn--Lobstein--1997}.

The $(r,a,b)$-codes have already been studied in some graphs under the name of \emph{weighted covering codes} by Cohen et al.~\cite{Cohen--Litsyn--Honkala--Mattson--1995}. Their work corresponds to a study of these codes in the Hamming metric. For a subset $C$ of vertices, they attach weights to different layers of the Hamming sphere and they consider weighted spheres centred at vertices of $C$. If several such spheres intersect in a vertex, they define the density of each vertex as the sum of the weights of the corresponding layers. The set $C$ is called a \emph{weighted covering} if the density at each vertex is at least one. When the density is exactly equal to one for all vertices, then $C$ is called a \emph{perfect weighted covering}. If the radius is equal to $1$, a $(1,a,b)$-code is exactly a perfect weighted covering of radius one with weight $\left(\frac{b-a+1}{b},\frac{1}{b}\right)$. For more details see~\cite[Chapter 13]{Cohen--Honkala--Litsyn--Lobstein--1997}.

While Cohen et al.~\cite{Cohen--Litsyn--Honkala--Mattson--1995} studied weighted codes in Hamming metric, Telle considered a particular case of these codes in graphs in general~\cite{Telle--1994}. For a subset $C$ of vertices, he defines the state of a vertex $u\in C$ by
$$state(u)=\left\{\begin{array}{ll}
\sigma_i & \text{ if }u\in C\text{ and }|N(u)\cap C|=i\\
\rho_i & \text{ if }u\not\in C\text{ and }|N(u)\cap C|=i.
\end{array}\right.$$
Then many properties of vertex subsets can be defined by allowing only a specific set $L$ of states. For instance, the set $C$ is a dominating set if the state $\rho_0$ is not allowed.  In this setting, $(1,a,b)$-codes are equivalent to $[\sigma_{a-1},\rho_b]$-dominating sets. Telle~\cite{Telle--1994} proved that the following decidability problem was NP-complete: ``Is it possible to decide whether a graph has an $[\sigma_a,\rho_b]$-dominating set ?''. The problem is still NP-complete when restricted to planar bipartite graphs of maximum degree three.

The particular case where the radius is $1$ has been studied a lot. For instance, $(1,a,b)$-codes are equivalent to equitable partitions with two cells~\cite[Chapter~5]{Godsil--1993}. In the multidimensional grid, which corresponds to the Lee metric with an infinite alphabet, $(1,a,b)$-codes were studied by Dorbec et al.~\cite{Dorbec--Gravier--Honkala--Mollard--2009} and Gravier et al.~\cite{Gravier--Mollard--Payan--1999}. For instance, the existence of $(1,2,1)$-codes\footnote{The reader may notice a distinction of notation between this paper and~\cite{Dorbec--Gravier--Honkala--Mollard--2009}. They write $(a,b)$-codes for what we denote $(1,a+1,b)$-codes as they consider open neighbourhoods and we consider closed neighbourhoods.} in $\mathbb{Z}^d$ is proved in both papers~\cite{Dorbec--Gravier--Honkala--Mollard--2009, Gravier--Mollard--Payan--1999}. In~\cite[Theorem~4]{Dorbec--Gravier--Honkala--Mollard--2009}, Dorbec et al.~present a method to construct $(1,a,b)$-codes in $\mathbb{Z}^d$. This method is based on a one-dimensional pattern of finite length that is extended by translations to colour $\mathbb{Z}^d$. Hence, the code obtained satisfies  periodic properties. 

\begin{theorem}[Dorbec et al.~\cite{Dorbec--Gravier--Honkala--Mollard--2009}]\label{thm:dorbec}
Assume that $1\le k\le n, 1\le d$ and 
$$A=\{a_1,a_2,\ldots,a_k\}\subseteq \mathbb{Z}_n \text{ (where $a_i\ne a_j$, when $i\ne j$)}$$
and $w_1,\ldots,w_d$ are (not necessarily distinct) elements of $\mathbb{Z}_n$. Consider the sums $a_i+w$ and the differences $a_j-w_j$. If these $2kd$ elements take each value in $A$ exactly $a$ times and each value in $\mathbb{Z}_n\setminus A$ exactly $b$ times, then the set 
$$C=\{(x_1,\ldots,x_d)\in\mathbb{Z}^d\mid x_1w_1+\cdots+x_dw_d\in A\}$$
is a $(1,a+1,b)$-code of $\mathbb{Z}^d$.
\end{theorem}

In the two-dimensional grid, i.e., the usual infinite grid, Puzynina studied the periodicity of $(r,a,b)$-codes. For $r=1$, there exist non-periodic $(r,a,b)$-codes but, all of them can be obtained from periodic ones~\cite{Puzynina--2004}. That is to say, if $r, a, b$ are such that there exists an $(r,a,b)$-code, then there exists a periodic colouring that is an $(r,a,b)$-code.  Moreover when $r\ge2$, the author proved that every $(r,a,b)$-code is periodic~\cite{Puzynina--2009}. The notion of constant $2$-labellings comes up as a natural translation of the periodicity of $(r,a,b)$-codes in the infinite grid $\mathbb{Z}^2$.

When the difference between $a$ and $b$ is large enough, the precise type of the periodic colouring is known. 
 
\begin{theorem}[Axenovich \cite{Axenovich--2003}]\label{thm_Axe}
If a colouring is an $(r,a,b)$-code of $\mathbb{Z}^2$ with $r\ge2$ and $|a-b|>4$, then it is one of the following diagonal colourings 1--5:
\begin{enumerate}
\item $q$-periodic colouring where $q\in\{r,r+1\}$ is odd and the monochromatic diagonals are parallel.
\item $q$-anti-periodic colouring where $q\in\{r,r+1\}$ is even.
\item $q$-periodic colouring where $q\in\{r,r+1\}$ is even and for all horizontal or vertical interval $I$ of length
$p$ the number of black vertices from the even sublattice and from the odd sublattice is the same.
\item $(2r+1)$-periodic colouring and for all horizontal or vertical interval $I$ of length
$p$ the number of black vertices from the even sublattice and from the odd sublattice is the same.
\item $2$-periodic or $3$-periodic colouring.
\end{enumerate}
\end{theorem}

Using Axenovich's characterization in terms of diagonal colouring of all $(r,a,b)$-codes in $\mathbb{Z}^2$ with $r\ge2$ and $|a-b|>4$ (Theorem~\ref{thm_Axe}), we show that the existence of $(r,a,b)$-codes in the infinite grid is linked with the existence of constant $2$-labellings in particular cycles. It turns out that studying only four types of weighted cycles is sufficient to characterize all $(r,a,b)$-codes with $|a-b|>4$ and to determine explicitly the possible values taken by the constants $a$ and $b$. Hence, we obtain a refinement of Axenovich's theorem.

\smallskip

\paragraph{Outline} This paper is organized as follows. The first section is dedicated to the presentation of constant $2$-labellings of weighted graphs in a general framework. Then we focus on the constant $2$-labellings in four types of weighted cycles. In Section~\ref{sec:projection_folding}, we present projection and folding techniques that link constant $2$-labellings to $(r,a,b)$-codes. Hopefully, these techniques can be applied to other problems involving periodic tilings. In Section~\ref{sec:application_rabc}, we apply the projection and folding method to obtain all possible values of constants $a$ and $b$ such that there exist $(r,a,b)$-codes of $\mathbb{Z}^2$ with $|a-b|>4$ and $r\ge2$. 
Note that to apply this method, the colouring of the grid must satisfy some specific properties.  Finally, we suggest directions for future work.

\section{Constant $2$-labellings}\label{main}

Given a graph $G=(V,E)$, a particular vertex $v\in V$, a map $w:V\to\mathbb{R}$ and a subgroup $A$ of the set $Aut(G)$
of all automorphisms of $G$, a \emph{constant 2-labelling} of $G$ is a mapping $c:V\to\{{\tt 0,1}\}$ such that there exist constants $a$ and $b$ satisfying
$$a=\sum_{\{u\in V \mid c(\xi(u))={\tt1}\}} w(u), \quad \forall\xi\in A_{\tt 1} \;\text{ and }\;
b= \sum_{\{u\in V \mid c(\xi'(u))={\tt1}\}} w(u),
\quad \forall\xi'\in A_{\tt 0}.$$
where $A_{\tt 1}=\{\xi\in A\mid c(\xi(v))={\tt1}\}$ and $A_{\tt 0}=\{\xi\in A\mid c(\xi(v))={\tt0}\}$. 

\begin{example} let $G=(V,E)$ be the graph with $V=\{v_0,\ldots,v_4\}$ represented in Figure~\ref{fig1}. 
Take $v=v_0, A=Aut(G), w:V\to\mathbb{R}$ and $c:V\to\{{\tt 0,1}\}$ defined by
$w(v_0)=3$, \mbox{$w(v_1)=w(v_3)=2$,} $w(v_2)=w(v_4)=5$ and $c(v_0)=c(v_3)=c(v_4)={\tt 0}, 
c(v_1)=c(v_2)={\tt1}$. It is clear that $c$ is a constant 2-labelling since 
$A$ contains only two automorphisms, $id$ and 
$$ \sigma :  v_0\mapsto v_0;\; v_1 \mapsto v_4;\; v_2\mapsto v_3;\; v_3\mapsto v_2;\; v_4\mapsto v_1.$$
\end{example}

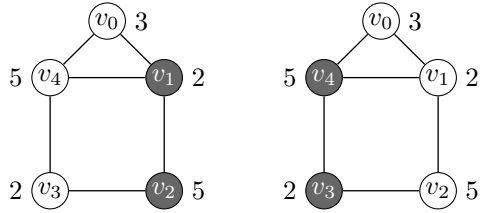
\begin{figure}[h!tbp]
  \begin{center}
  \begin{tikzpicture}[scale=1.5]
    \tikzstyle{every node}=[shape=circle,fill=white,draw=black,minimum size=8pt,inner sep=1pt]
    \tikzstyle{black}=[shape=circle,fill=black!60,draw=black,minimum size=8pt,inner sep=1pt]
    \tikzstyle{w}=[fill=none,draw=none,minimum size=10pt,inner sep=1pt]
    \node(a) at (0.5,1.5) {$v_0$};		\node[w] at (0.8,1.5) {3};
    \node[black](b) at (1,1) {{\color{white}$v_1$}};	\node[w] at (1.3,1) {2};
    \node[black](c) at (1,0) {{\color{white}$v_2$}};	\node[w] at (1.3,0) {5};
    \node(d) at (0,0) {$v_3$};		\node[w] at (-0.3,0) {2};
    \node(e) at (0,1) {$v_4$};		\node[w] at (-0.3,1) {5};
    \tikzstyle{every path}=[color =black, line width = 0.5 pt]
    \tikzstyle{every node}=[shape=circle,minimum size=20pt,inner sep=2pt]
    \draw (a) to node {}  (b);
    \draw (c) to node {}  (b);
    \draw (e) to node {}  (b);
    \draw (c) to node {}  (d);
    \draw (e) to node {}  (d);
    \draw (e) to node {}  (a);
  \end{tikzpicture}$\qquad$
  \begin{tikzpicture}[scale=1.5]
    \tikzstyle{every node}=[shape=circle,fill=white,draw=black,minimum size=8pt,inner sep=1pt]
    \tikzstyle{black}=[shape=circle,fill=black!60,draw=black,minimum size=8pt,inner sep=1pt]
    \tikzstyle{w}=[fill=none,draw=none,minimum size=10pt,inner sep=1pt]
    \node(a) at (0.5,1.5) {$v_0$};	\node[w] at (0.8,1.5) {3};
    \node(b) at (1,1) {$v_1$};	\node[w] at (1.3,1) {2};
    \node(c) at (1,0) {$v_2$};		\node[w] at (1.3,0) {5};
    \node[black](d) at (0,0) {{\color{white}$v_3$}};	\node[w] at (-0.3,0) {2};
    \node[black](e) at (0,1) {{\color{white}$v_4$}};		\node[w] at (-0.3,1) {5};
    \tikzstyle{every path}=[color =black, line width = 0.5 pt]
    \tikzstyle{every node}=[shape=circle,minimum size=20pt,inner sep=2pt]
    \draw (a) to node {}  (b);
    \draw (c) to node {}  (b);
    \draw (e) to node {}  (b);
    \draw (c) to node {}  (d);
    \draw (e) to node {}  (d);
    \draw (e) to node {}  (a);
  \end{tikzpicture}
  \end{center}
  \caption{A colouring of a graph $G$ (on the left) and its composition with the automorphism $\sigma$ (on the right).}\label{fig1}
\end{figure}

We can make some straightforward observations about constant $2$-labellings. The following proposition allows us to consider either a colouring $c$ or its complement colouring $\overline{c}$.

\begin{proposition}[Complementary property]\label{prop_compl}
 Let $G=(V,E)$ be a weighted graph, $w:V\to\mathbb{R}$ be the weight map, $v\in V$ and $A\leq Aut(G)$. 
 Set $\omega:=\sum_{u\in V} w(u)$. 
 A colouring $c$ is a constant $2$-labelling of $G$ with respective constants $a$ and $b$ if and only if
 the colouring $\overline{c}$ is a constant $2$-labelling with respective constants $\omega-b$ and $\omega-a$.
\end{proposition}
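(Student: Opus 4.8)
The plan is to unwind the definitions and exploit two elementary facts: complementation interchanges the roles of the two colour classes, and for any single automorphism the black and white weight-sums add up to the fixed total $\omega$. Everything then reduces to careful bookkeeping, with the converse handled by an involution argument.

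First I would record the effect of complementation on the index sets $A_\bullet$ and $A_\circ$. Since $\overline{\varphi}=\sigma\circ\varphi$ with $\sigma$ exchanging the two colours, we have $\overline{\varphi}\circ\xi(v)=\bullet$ if and only if $\varphi\circ\xi(v)=\circ$. Writing $\overline{A}_\bullet$ and $\overline{A}_\circ$ for the sets attached to $\overline{\varphi}$ at the same vertex $v$, this shows at once that $\overline{A}_\bullet=A_\circ$ and $\overline{A}_\circ=A_\bullet$: the two index sets are simply swapped.

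Next I would compute the weight-sums. Fix any $\xi\in A$. As $\xi$ is a bijection of $V$ and $\varphi$ takes only the two values $\bullet,\circ$, the sets $\{u\mid\varphi\circ\xi(u)=\bullet\}$ and $\{u\mid\varphi\circ\xi(u)=\circ\}$ partition $V$, whence
$$\sum_{\{u\mid\overline{\varphi}\circ\xi(u)=\bullet\}} w(u)=\sum_{\{u\mid\varphi\circ\xi(u)=\circ\}} w(u)=\omega-\sum_{\{u\mid\varphi\circ\xi(u)=\bullet\}} w(u).$$
Combining this identity with the swap $\overline{A}_\bullet=A_\circ$, the black weight-sum of $\overline{\varphi}$ over any $\xi\in\overline{A}_\bullet=A_\circ$ equals $\omega$ minus the black weight-sum of $\varphi$ over that same $\xi\in A_\circ$, which is the constant $b$ by hypothesis; hence it equals the constant $\omega-b$. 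The identical computation over $\overline{A}_\circ=A_\bullet$ yields $\omega-a$. In particular these sums are genuinely constant on $\overline{A}_\bullet$ and on $\overline{A}_\circ$, so $\overline{\varphi}$ is a constant $2$-labelling with constants $\omega-b$ and $\omega-a$.

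Finally, for the converse I would invoke the involution $\overline{\overline{\varphi}}=\varphi$: applying the forward implication to $\overline{\varphi}$, whose constants are $\omega-b$ and $\omega-a$, returns a constant $2$-labelling $\overline{\overline{\varphi}}=\varphi$ with constants $\omega-(\omega-a)=a$ and $\omega-(\omega-b)=b$, as required. I expect no genuine obstacle here; the only points demanding care are tracking which index set and which constant are exchanged under complementation, and checking that the degenerate monochromatic cases—where one of $a,b$ is undefined—remain consistent with the convention fixed just before the statement.
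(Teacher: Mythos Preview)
Your argument is correct. The paper actually states this proposition without proof, treating it as a straightforward observation; your unwinding of the definitions---swapping $A_\bullet$ and $A_\circ$ under complementation and using that the black and white weight-sums add to $\omega$ for each fixed $\xi$---is exactly the natural verification one would supply, and the involution $\overline{\overline{\varphi}}=\varphi$ cleanly handles the converse.
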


 If $c$ is monochromatic black, then the constants are such that $a=\sum_{u\in V} w(u)$ and $b$ is not defined. Otherwise $c$ is white, $a$ is not defined and $b=0$.
It is clear that, for a weighted graph $G=(V,E)$ with $v\in V$, any trivial colouring of $V$ is a constant $2$-labelling for any weight map and any subgroup of $Aut(G)$. Such constant $2$-labellings are called \emph{trivial}.
The definition of constant $2$-labellings gives rise to the natural question: whether there exists non-trivial constant $2$-labellings for some classes of weighted graphs. We answer that question in the case of four types of weighted cycles in the next subsection.

\begin{remark}
Consider the complete graph $K_n$ and let $w:V(K_n)\to\mathbb{R}$, $v\in V(K_n)$, $A=Aut(K_n)$. It is straightforward to show that there exists a non-trivial constant $2$-labelling of $K_n$
if and only if $w(v_1)=w(v_2)$ for all $v_1, v_2 \in V\setminus\{v\}$. See \cite[Remark~6.3.]{Vandomme--thesis} for details.

%
\end{remark}

\subsection{Constant $2$-labellings in particular weighted cycles}

We consider some particular weighted cycles $\mathcal{C}_p$ 
with at most 4 different weights on the vertices $0,\ldots,p-1$. If the weights are $w(0) ,\ldots, w(p-1)$, then we represent the cycle by the word $w(0)\ldots w(p-1)$. We will use the letters $z,x,y$ and $t$ to denote the weights of vertices. For instance, the cycle depicted in Figure~\ref{fig:cycle_0} is represented by the word $zx^{p-1}$.

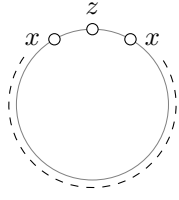
\begin{figure}[h!tbp]
  \begin{center}
  \begin{tikzpicture}[scale=1]
	  \path[draw,help lines] (0,0) circle (1 cm);
	  \tikzstyle{trait}=[dashed]
	  \draw[trait] (-215:1.1) arc (-215:35:1.1cm);
	  \tikzstyle{every node}=[shape=circle,fill=white,draw=black,minimum size=0.5pt,inner sep=1.5pt]
	  \tikzstyle{empty}=[fill=none,draw=none,minimum size=0.5pt,inner sep=1.5pt]
	  \node at (60:1)[label=right:$x$] {};
	  \node at (90:1) [label=above:$z$]{};
	  \node at (120:1)[label=left:$x$] {};
  \end{tikzpicture}
  \end{center}
  \vspace{-0.3cm}
  \caption{Weighted cycle $\mathcal{C}_p$ of Type 0.}\label{fig:cycle_0}
\end{figure}

We restrict our study of constant $2$-labellings to only four types of weighted cycles (see Figure \ref{figtypes}) and we set $v:=0$ and $A:=\{\mathcal{R}_k\mid k\in \mathbb{Z}\}$. This restriction is due to the initial motivation behind this work: to know the possible values of the constants in $(r,a,b)$-code of the infinite grid. The four types of weighted cycles on $p$ vertices depend on $p\bmod{4}$.
\begin{itemize}
\item  If $p\equiv 1 \pmod{4}$,  the cycle represented by the word $z(xy)^\frac{p-1}{4}(yx)^\frac{p-1}{4}$ is called \emph{Type1mod}. 
\item  If $p\equiv 2 \pmod{4}$, the cycle represented by the word $z(xy)^\frac{p-2}{4}t(yx)^\frac{p-2}{4}$ is called \emph{Type2mod}. 
\item  If $p\equiv 3 \pmod{4}$,  the cycle represented by the word $z(xy)^\frac{p-3}{4}xx(yx)^\frac{p-3}{4}$ is called \emph{Type3mod}.
\item  If $p\equiv 0 \pmod{4}$,  the cycle represented by the word $z(xy)^\frac{p-4}{4}xtx(yx)^\frac{p-4}{4}$ is called \emph{Type4mod}. 
\end{itemize}
Hence, we only consider weighted cycles with an axial symmetry in the distribution of weights. It seems to play an important role for the existence of constant $2$-labellings. For instance, a weighted cycle $\mathcal{C}_p$ represented by the word $z(xy)^{\frac{p-1}{2}}$ with $x\ne y$, has only monochromatic colourings as constant $2$-labellings. See \cite[Lemma~B.3.]{Vandomme--thesis} for a proof.

\begin{figure}[h!tbp]
  \begin{center}
  \begin{tabular}{c c c}
  Type1mod: $z(xy)^\frac{p-1}{4}(yx)^\frac{p-1}{4}$&$\quad$&Type3mod: $z(xy)^\frac{p-3}{4}xx(yx)^\frac{p-3}{4}$\\
  \begin{tikzpicture}[scale=1]
	  \path[draw,help lines] (0,0) circle (1 cm);
	  \tikzstyle{trait}=[dashed]
	  \draw[trait] (165:1.1) arc (165:210:1.1cm);
	  \draw[trait] (-30:1.1) arc (-30:15:1.1cm);
	  \tikzstyle{every node}=[shape=circle,fill=white,draw=black,minimum size=0.5pt,inner sep=1.5pt]
	  \node at (30:1)[label=right:$y$] {};
	  \node at (60:1)[label=right:$x$] {};
	  \node at (90:1) [label=above:$z$]{};
	  \node at (120:1)[label=left:$x$] {};
	  \node at (150:1) [label=left:$y$]{};
	  \node at (225:1) [label=left:$x$]{};
	  \node at (255:1) [label=below:$y$]{};
	  \node at (285:1) [label=below:$y$]{};
	  \node at (315:1) [label=right:$x$]{};
  \end{tikzpicture}
 &$\quad$ &\begin{tikzpicture}[scale=1]
	  \path[draw,help lines] (0,0) circle (1 cm);
	  \tikzstyle{trait}=[dashed]
	  \draw[trait] (165:1.1) arc (165:240:1.1cm);
	  \draw[trait] (-60:1.1) arc (-60:15:1.1cm);
	  \tikzstyle{every node}=[shape=circle,fill=white,draw=black,minimum size=0.5pt,inner sep=1.5pt]
	  \node at (30:1)[label=right:$y$] {};
	  \node at (60:1)[label=right:$x$] {};
	  \node at (90:1) [label=above:$z$]{};
	  \node at (120:1)[label=left:$x$] {};
	  \node at (150:1) [label=left:$y$]{};
	  \node at (255:1) [label=below:$x$]{};
	  \node at (285:1) [label=below:$x$]{};
  \end{tikzpicture}\\
\\
  Type2mod: $z(xy)^\frac{p-2}{4}t(yx)^\frac{p-2}{4}$ &$\quad$&Type4mod: $z(xy)^\frac{p-4}{4}xtx(yx)^\frac{p-4}{4}$\\
  \begin{tikzpicture}[scale=1]
	  \path[draw,help lines] (0,0) circle (1 cm);
	  \tikzstyle{trait}=[dashed]
	  \draw[trait] (165:1.1) arc (165:195:1.1cm);
	  \draw[trait] (-15:1.1) arc (-15:15:1.1cm);
	  \tikzstyle{every node}=[shape=circle,fill=white,draw=black,minimum size=0.5pt,inner sep=1.5pt]
	  \node at (30:1)[label=right:$y$] {};
	  \node at (60:1)[label=right:$x$] {};
	  \node at (90:1) [label=above:$z$]{};
	  \node at (120:1)[label=left:$x$] {};
	  \node at (150:1)[label=left:$y$] {};
	  \node at (210:1) [label=left:$x$]{};
	  \node at (240:1) [label=left:$y$]{};
	  \node at (270:1) [label=below:$t$]{};
	  \node at (300:1) [label=right:$y$]{};
	  \node at (330:1) [label=right:$x$]{};
  \end{tikzpicture}
  &$\quad$&\begin{tikzpicture}[scale=1]
	  \path[draw,help lines] (0,0) circle (1 cm);
	  \tikzstyle{trait}=[dashed]
	  \draw[trait] (165:1.1) arc (165:215:1.1cm);
	  \draw[trait] (-35:1.1) arc (-35:15:1.1cm);
	  \tikzstyle{every node}=[shape=circle,fill=white,draw=black,minimum size=0.5pt,inner sep=1.5pt]
	  \node at (30:1)[label=right:$y$] {};
	  \node at (60:1)[label=right:$x$] {};
	  \node at (90:1) [label=above:$z$]{};
	  \node at (120:1)[label=left:$x$] {};
	  \node at (150:1)[label=left:$y$] {};
	  \node at (240:1) [label=left:$x$]{};
	  \node at (270:1) [label=below:$t$]{};
	  \node at (300:1) [label=right:$x$]{};
  \end{tikzpicture}
  \end{tabular}
  \end{center}
  \caption{Types of weighted cycles $\mathcal{C}_p$.}\label{figtypes}
\end{figure}

Note that the cycle in Figure~\ref{fig:cycle_0} is a particular case of all of these types. Such cycles are called \emph{Type $0$}. As we see in the next lemma, the case of Type $0$ cycles is easy to handle.

\begin{lemma}\label{prop_type1}
 For cycles $\mathcal{C}_p$ of Type $0$, represented by $zx^{p-1}$ with $1<p\in\mathbb{N}$, 
 all colourings are constant $2$-labellings.
\end{lemma}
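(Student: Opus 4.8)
The plan is to exploit the fact that the weight map of a Type~1 cycle is as degenerate as possible: $w(0)=z$ while $w(i)=x$ for every $i\in\{1,\dots,p-1\}$. Because of this, the weighted sum of the black vertices of any rotation $\varphi\circ\mathcal{R}_k$ is governed by only two quantities, namely the number of black vertices and whether the vertex $0$ is black; both are easy to track, and this lets us avoid the case analysis used for the later types.

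First I would fix an arbitrary coloring $\varphi$ and set $n:=|\varphi^{-1}(\bullet)|$. Since each rotation $\mathcal{R}_k$ is a bijection of $\{0,\dots,p-1\}$, the rotated coloring $\varphi\circ\mathcal{R}_k$ also has exactly $n$ black vertices, so $n$ is a rotation-invariant quantity. Next I would evaluate the sum $\sum_{\{u\mid \varphi\circ\mathcal{R}_k(u)=\bullet\}} w(u)$. Each of the $n$ black vertices contributes its weight, which equals $x$ unless the vertex is $0$, in which case it contributes $z$ instead. Consequently, if $\mathcal{R}_k\in A_\bullet$ (that is, the vertex $0$ is black) the sum equals $z+(n-1)x$, whereas if $\mathcal{R}_k\in A_\circ$ the sum equals $nx$.

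The crucial point is that both expressions depend only on $n$ and not on the particular shift $k$, so the sum is constant over $A_\bullet$ and over $A_\circ$ separately. Hence every coloring $\varphi$ is a constant $2$-labelling, with $a=z+(n-1)x$ and $b=nx$; writing $\alpha:=n-1$ and letting $\varphi$ range over the non-monochromatic colorings recovers the first row of the table in Theorem~\ref{thm:types}, while the monochromatic cases fall under the trivial labellings described earlier. There is essentially no obstacle here, as Type~1 is the base case: the single observation that all vertices other than $0$ carry the identical weight $x$ makes the distinction between $\varphi$ and its rotations irrelevant, exactly as anticipated in the general scheme.
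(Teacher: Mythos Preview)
Your proof is correct and follows essentially the same approach as the paper's: both arguments observe that, since every vertex other than $0$ has weight $x$, the weighted sum of black vertices is determined solely by the total number $n$ of black vertices and the color of vertex $0$, yielding $a=z+(n-1)x$ and $b=nx$. The paper phrases this with $\alpha_x=n-1$ in place of your $n$, but the reasoning is identical.
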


\begin{proof}
  Let $ c$ be a colouring of $\mathcal{C}_p$.
  We set $\alpha_x$ to be the number of black vertices with weight $x$.
  If $ c(0)={\tt1}$, we have $\alpha_x+1$ black vertices and it is clear that $ c$ is a constant $2$-labelling
  where the weighted sum of black vertices is equal to $a=\alpha_x x +z$ (respectively $b=(\alpha_x+1)x$) if the vertex $0$ is black (resp. white).
\end{proof}

We now turn our attention to cycles of Type1mod which are not of Type 0.

\begin{lemma}\label{prop_type5}
 Let $p>2$ be an integer such that $p\equiv 1\pmod{4}$.
 For cycles $\mathcal{C}_p$ of Type1mod, represented by $z(xy)^\frac{p-1}{4}(yx)^\frac{p-1}{4}$, with $x\neq y$, there exists a non-trivial constant $2$-labelling $c$ if and only if $p\equiv0\pmod{3}$. In which case, $c$ is $3$-periodic of pattern period ${\tt 110}$.
\end{lemma}

\begin{proof}
 Let $p>2$ be an integer such that $p\equiv 1\pmod{4}$ and let $\mathcal{C}_p$ be a cycle of Type1mod with $x\neq y$. Assume that $c$ is a non-trivial constant $2$-labelling of $\mathcal{C}_p$. The colouring $c$ is not alternate since $p$ is odd. Hence, without loss of generality, we can assume that there exist two consecutive black vertices. Moreover, we can suppose that these vertices are the vertices $0$ and $1$ of $\mathcal{C}_p$. Indeed, $c$ is a constant $2$-labelling if and only if $c\circ \mathcal{R}_j$ is a constant labelling for all $j\in \mathbb{Z}$.
 
 For the colouring $c$, we let $\alpha_x$, $\alpha_y$ denote respectively the number of black vertices with weight $x$ and $y$. We have $a=\alpha_x x+\alpha_y y+z$. We consider the colour of the vertex $\frac{p+1}{2}$.
 
 Assume first that $c(\frac{p+1}{2})={\tt 1}$. Then, for the colouring $c\circ\mathcal{R}_1$, the sum of the weights of black vertices is
 $$a=(\alpha_x-1)y+z+(\alpha_y-1)x+y=\alpha_y x+\alpha_x y +z$$
 since under a $1$-rotation, any black vertex with weight $x$ becomes a black vertex of weight $y$, except for the vertex $1$ which becomes the vertex with weight $z$, and similarly any black vertex with weight $y$ becomes a black vertex of weight $x$ except for the vertex $\frac{p+1}{2}$ which becomes a vertex of weight $y$. As the weights $x$  and $y$ are distinct, it implies that $\alpha_x=\alpha_y$ (in order to have a sum of black vertices constant and equal to $a$). We set $\alpha:=\alpha_x=\alpha_y$ for a shorter notation.
 
 Let $i$ be the smallest integer in $\{0,\ldots,\frac{p-1}{2}-1\}$ such that $c(i+1)={\tt 0}$ and assume $c(\frac{p+1}{2}+\ell)={\tt 1}$ for any $\ell\in\{0,\ldots,i\}$ (otherwise, consider the colouring $c\circ\mathcal{R}_{\frac{p+1}{2}}$ instead of $c$). Then $c(\frac{p+1}{2}+i+1)={\tt 0}$ as depicted in Figure~\ref{fig:type1mod_case1_1}. With the colouring $c\circ\mathcal{R}_{i+1}$, we obtain a sum of the weights of black vertices equal to $b=\alpha x+(\alpha+1)y$ (Figure~\ref{fig:type1mod_case1_2}). To conclude this case, consider the vertex $i+2$ and observe that whatever value is assigned to $c(i+2)$, we obtain a contradiction (Figure~\ref{fig:type1mod_case1_3}).
 
 \begin{figure}[h!tbp]
 \begin{center}
 \scalebox{0.75}{
 \begin{tabular}{c c c}
  \begin{tikzpicture}[scale=1]
	  \path[draw,help lines] (0,0) circle (1 cm);
	  \tikzstyle{every node}=[shape=circle,fill=white,draw=black,minimum size=0.5pt,inner sep=0.5pt]
	  \tikzstyle{white}=[shape=circle,fill=white,draw=black,minimum size=0.5pt,inner sep=1.5pt]
	  \tikzstyle{black}=[shape=circle,fill=black,draw=black,minimum size=0.5pt,inner sep=1.5pt]
	  \tikzstyle{trait}=[dashed, draw=white,line width=0.75pt]
	  \draw[trait] (35:1) arc (35:85:1cm);
	  \draw[trait] (215:1) arc (215:255:1cm);
	  \node[white,label={[label distance=0.1cm]0:${i+1}$}] at (10:1){};	  
	  \node[black,label={[label distance=0.1cm]30:$i$}] at (30:1) {};
	  \node[black,label={[label distance=0.1cm]90:$0$}] at (90:1) {};
	  \node[black,label={[label distance=0.2cm]180:${\frac{p+1}{2}+i+1}$}] at (180:1) {};
	  \node[black,label={[label distance=0.1cm]180:${\frac{p+1}{2}+i}$}] at (200:1) {};
	  \node[black,label={[label distance=0.1cm]255:$\frac{p+1}{2}$}] at (255:1) {};
    \end{tikzpicture}
   &\begin{tikzpicture}
	  \node at (1,0){};
	  \node at (1,2) {$\mathcal{R}_i$};
   	  \draw[arrows={-stealth}, line width=1.5pt] (0,1) to (2,1);
    \end{tikzpicture}
   &\begin{tikzpicture}[scale=1]
  	  \path[draw,help lines] (0,0) circle (1 cm);
	  \tikzstyle{trait}=[dashed, draw=white,line width=0.75pt]
	  \tikzstyle{every node}=[shape=circle,fill=white,draw=black,minimum size=0.5pt,inner sep=0.5pt]
	  \tikzstyle{black}=[shape=circle,fill=black,draw=black,minimum size=0.5pt,inner sep=1.5pt]
	  \tikzstyle{white}=[shape=circle,fill=white,draw=black,minimum size=0.5pt,inner sep=1.5pt]
	  \draw[trait] (95:1) arc (95:145:1cm);
	  \draw[trait] (255:1) arc (255:325:1cm);
	  \node[white,label={[label distance=0.1cm]0:$1$}] at (70:1){};	  
	  \node[black,label={[label distance=0.1cm]90:$0$}] at (90:1) {};
	  \node[black,label={[label distance=0.1cm]180:${p-i}$}] at (150:1){};
	  \node[black,label={[label distance=0.1cm]180:$\frac{p+1}{2}+1$}] at (235:1){};
  	  \node[black,label={[label distance=0.1cm]255:$\frac{p+1}{2}$}] at (255:1) {};
	  \node[black,label={[label distance=0.1cm]0:${\frac{p+1}{2}-i}$}] at (330:1) {};
    \end{tikzpicture}\\
    $a=\alpha x+\alpha y+z$ & & $a=\alpha x+\alpha y+z$\\ & &\\
    & & \begin{tikzpicture}
	  \node at (-1.5,0){};	
	  \node at (1,0.5) {$\mathcal{R}_\frac{p+1}{2}$};
   	  \draw[arrows={-stealth}, line width=1.5pt] (0,1) to (0,0);
    \end{tikzpicture}\\
	\begin{tikzpicture}[scale=1]
  	  \path[draw,help lines] (0,0) circle (1 cm);
	  \tikzstyle{trait}=[dashed, draw=white,line width=0.75pt]
	  \tikzstyle{every node}=[shape=circle,fill=white,draw=black,minimum size=0.5pt,inner sep=0.5pt]
	  \tikzstyle{black}=[shape=circle,fill=black,draw=black,minimum size=0.5pt,inner sep=1.5pt]
	  \tikzstyle{white}=[shape=circle,fill=white,draw=black,minimum size=0.5pt,inner sep=1.5pt]
	  \draw[trait] (110:1) arc (110:170:1cm);
	  \draw[trait] (305:1) arc (305:370:1cm);  
	  \node[black,label={[label distance=0.1cm]90:$0$}] at (90:1) {};
  	  \node[black,label={[label distance=0.1cm]180:$p-1$}] at (110:1){};	
	  \node[black,label={[label distance=0.1cm]180:${p-i-1}$}] at (170:1){};
	  \node[white,label={[label distance=0.1cm]285:$\frac{p-1}{2}$}]  at (285:1) {};
  	  \node[black,label={[label distance=0.1cm]0:$\frac{p-1}{2}-1$}] at (305:1) {};
	  \node[black,label={[label distance=0.1cm]0:${\frac{p+1}{2}-i-2}$}] at (370:1) {};
    \end{tikzpicture}  &
    \begin{tikzpicture}
	  \node at (1,0){};
	  \node at (1,2) {$\mathcal{R}_1$};
   	  \draw[arrows={-stealth}, line width=1.5pt] (2,1) to (0,1);
    \end{tikzpicture}
    &  \begin{tikzpicture}[scale=1]
  	  \path[draw,help lines] (0,0) circle (1 cm);
	  \tikzstyle{trait}=[dashed, draw=white,line width=0.75pt]
	  \tikzstyle{every node}=[shape=circle,fill=white,draw=black,minimum size=0.5pt,inner sep=0.5pt]
	  \tikzstyle{black}=[shape=circle,fill=black,draw=black,minimum size=0.5pt,inner sep=1.5pt]
	  \tikzstyle{white}=[shape=circle,fill=white,draw=black,minimum size=0.5pt,inner sep=1.5pt]
	  \draw[trait] (95:1) arc (95:145:1cm);
	  \draw[trait] (285:1) arc (285:350:1cm);
	  \node[black,label={[label distance=0.1cm]0:$1$}] at (70:1){};	  
	  \node[black,label={[label distance=0.1cm]90:$0$}] at (90:1) {};
	  \node[black,label={[label distance=0.1cm]180:${p-i}$}] at (150:1){};
  	  \node[white,label={[label distance=0.1cm]255:$\frac{p+1}{2}$}] at (255:1) {};
	  \node[black,label={[label distance=0.1cm]285:$\frac{p-1}{2}$}]  at (285:1) {};
	  \node[black,label={[label distance=0.1cm]0:${\frac{p+1}{2}-i-1}$}] at (350:1) {};
    \end{tikzpicture}  \\
       $a=(\alpha-1)y+z+\alpha x+x$&&$a=\alpha x+\alpha y+z$ 
 \end{tabular}}
 \end{center}
 \caption{Rotations of the colouring $c$ of a Type1mod cycle with $c(\frac{p+1}{2}+i+1)={\tt 1}$, and their corresponding weighted sums of black vertices which are not all equal.}\label{fig:type1mod_case1_1}
 \end{figure}
 
\begin{figure}[h!tbp]
 \begin{center}
 \scalebox{0.74}{
 \begin{tabular}{c c c }
  \begin{tikzpicture}[scale=1]
	  \path[draw,help lines] (0,0) circle (1 cm);
	  \tikzstyle{every node}=[shape=circle,fill=white,draw=black,minimum size=0.5pt,inner sep=0.5pt]
	  \tikzstyle{white}=[shape=circle,fill=white,draw=black,minimum size=0.5pt,inner sep=1.5pt]
	  \tikzstyle{black}=[shape=circle,fill=black,draw=black,minimum size=0.5pt,inner sep=1.5pt]
	  \tikzstyle{trait}=[dashed, draw=white,line width=0.75pt]
	  \draw[trait] (35:1) arc (35:85:1cm);
	  \draw[trait] (200:1) arc (200:255:1cm);
	  \node[white,label={[label distance=0.1cm]0:${i+1}$}] at (10:1){};	  
	  \node[black,label={[label distance=0.1cm]30:$i$}] at (30:1) {};
	  \node[black,label={[label distance=0.1cm]90:$0$}] at (90:1) {};
	  \node[white,label={[label distance=0.2cm]180:${\frac{p+1}{2}+i+1}$}] at (180:1) {};
	  \node[black,label={[label distance=0.1cm]180:${\frac{p+1}{2}+i}$}] at (200:1) {};
	  \node[black,label={[label distance=0.1cm]255:$\frac{p+1}{2}$}] at (255:1) {};
    \end{tikzpicture}
   &\begin{tikzpicture}[scale=1]
         \begin{scope}[yshift=-1.3cm]
      \node at (-2.8,0.5) {$\mathcal{R}_{i}$};
   	  \draw[arrows={-stealth}, line width=1.5pt] (-3.3,0) to (-2.3,0);
   	  \begin{scope}[xshift=4.5cm]
   	  \node at (-2,0.5) {$\mathcal{R}_1$};
   	  \draw[arrows={-stealth}, line width=1.5pt] (-2.4,0) to (-1.4,0);
   	  \end{scope}\end{scope}
  	  \path[draw,help lines] (0,0) circle (1 cm);
	  \tikzstyle{trait}=[dashed, draw=white,line width=0.75pt]
	  \tikzstyle{every node}=[shape=circle,fill=white,draw=black,minimum size=0.5pt,inner sep=0.5pt]
	  \tikzstyle{black}=[shape=circle,fill=black,draw=black,minimum size=0.5pt,inner sep=1.5pt]
	  \tikzstyle{white}=[shape=circle,fill=white,draw=black,minimum size=0.5pt,inner sep=1.5pt]
	  \draw[trait] (95:1) arc (95:145:1cm);
	  \draw[trait] (255:1) arc (255:325:1cm);
	  \node[white,label={[label distance=0.1cm]0:$1$}] at (70:1){};	  
	  \node[black,label={[label distance=0.1cm]90:$0$}] at (90:1) {};
	  \node[black,label={[label distance=0.1cm]180:${p-i}$}] at (150:1){};
	  \node[white,label={[label distance=0.1cm]180:$\frac{p+1}{2}+1$}] at (235:1){};
  	  \node[black,label={[label distance=0.1cm]255:$\frac{p+1}{2}$}] at (255:1) {};
	  \node[black,label={[label distance=0.1cm]0:${\frac{p+1}{2}-i}$}] at (330:1) {};
    \end{tikzpicture}
    &
    \begin{tikzpicture}[scale=1]
  	  \path[draw,help lines] (0,0) circle (1 cm);
	  \tikzstyle{trait}=[dashed, draw=white,line width=0.75pt]
	  \tikzstyle{every node}=[shape=circle,fill=white,draw=black,minimum size=0.5pt,inner sep=0.5pt]
	  \tikzstyle{black}=[shape=circle,fill=black,draw=black,minimum size=0.5pt,inner sep=1.5pt]
	  \tikzstyle{white}=[shape=circle,fill=white,draw=black,minimum size=0.5pt,inner sep=1.5pt]
	  \draw[trait] (110:1) arc (110:170:1cm);
	  \draw[trait] (285:1) arc (285:350:1cm); 
	  \node[white,label={[label distance=0.1cm]90:$0$}] at (90:1) {};
	  \node[black,label={[label distance=0.1cm]180:$p-1$}] at (110:1){};	 
	  \node[black,label={[label distance=0.1cm]180:${p-i-1}$}] at (170:1){};
  	  \node[white,label={[label distance=0.1cm]255:$\frac{p+1}{2}$}] at (255:1) {};
	  \node[black,label={[label distance=0.1cm]285:$\frac{p-1}{2}$}]  at (285:1) {};
	  \node[black,label={[label distance=0.1cm]0:${\frac{p+1}{2}-i-1}$}] at (350:1) {};
    \end{tikzpicture}
    \\
    $a=\alpha x+\alpha y+z$ & $a=\alpha x+\alpha y+z$ & $b=\alpha y+(\alpha-1)x+y+x$ 
 \end{tabular}}
 \end{center}
 \caption{Rotations of the colouring $c$ of a Type1mod cycle with $c(\frac{p+1}{2}+i+1)={\tt 0}$, and their corresponding weighted sums of black vertices.}\label{fig:type1mod_case1_2}
 \end{figure} 

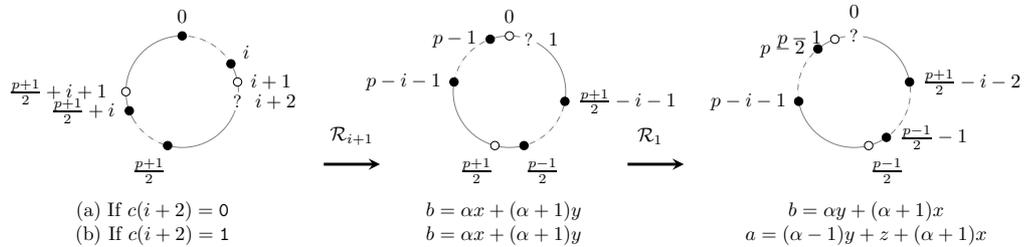
\begin{figure}[h!tbp]
\begin{center}
\scalebox{0.74}{
	\begin{tabular}{ccc}
	 \begin{tikzpicture}[scale=1]
	  \path[draw,help lines] (0,0) circle (1 cm);
	  \tikzstyle{every node}=[shape=circle,fill=white,draw=black,minimum size=0.5pt,inner sep=0.5pt]
	  \tikzstyle{white}=[shape=circle,fill=white,draw=black,minimum size=0.5pt,inner sep=1.5pt]
	  \tikzstyle{black}=[shape=circle,fill=black,draw=black,minimum size=0.5pt,inner sep=1.5pt]
	  \tikzstyle{trait}=[dashed, draw=white,line width=0.75pt]
	  \draw[trait] (35:1) arc (35:85:1cm);
	  \draw[trait] (215:1) arc (215:255:1cm);
	  \node[draw=white,fill=white,label={[label distance=0.1cm]0:${i+2}$}] at (350:1){\small{?}};	
	  \node[white,label={[label distance=0.1cm]0:${i+1}$}] at (10:1){};	  
	  \node[black,label={[label distance=0.1cm]30:$i$}] at (30:1) {};
	  \node[black,label={[label distance=0.1cm]90:$0$}] at (90:1) {};
	  \node[white,label={[label distance=0.2cm]180:${\frac{p+1}{2}+i+1}$}] at (180:1) {};
	  \node[black,label={[label distance=0.1cm]180:${\frac{p+1}{2}+i}$}] at (200:1) {};
	  \node[black,label={[label distance=0.1cm]255:$\frac{p+1}{2}$}] at (255:1) {};
    \end{tikzpicture}
   &
    \begin{tikzpicture}[scale=1]
      \begin{scope}[yshift=-1.3cm]
      \node at (-2.8,0.5) {$\mathcal{R}_{i+1}$};
   	  \draw[arrows={-stealth}, line width=1.5pt] (-3.3,0) to (-2.3,0);
   	  \begin{scope}[xshift=4.5cm]
   	  \node at (-2,0.5) {$\mathcal{R}_1$};
   	  \draw[arrows={-stealth}, line width=1.5pt] (-2.4,0) to (-1.4,0);
   	  \end{scope}\end{scope}
   	  
  	  \path[draw,help lines] (0,0) circle (1 cm);
	  \tikzstyle{trait}=[dashed, draw=white,line width=0.75pt]
	  \tikzstyle{every node}=[shape=circle,fill=white,draw=black,minimum size=0.5pt,inner sep=0.5pt]
	  \tikzstyle{black}=[shape=circle,fill=black,draw=black,minimum size=0.5pt,inner sep=1.5pt]
	  \tikzstyle{white}=[shape=circle,fill=white,draw=black,minimum size=0.5pt,inner sep=1.5pt]
	  \draw[trait] (110:1) arc (110:170:1cm);
	  \draw[trait] (285:1) arc (285:350:1cm); 
	  \node[draw=none,fill=white,label={[label distance=0.1cm]0:$1$}] at (70:1){\small{?}};	
	  \node[white,label={[label distance=0.1cm]90:$0$}] at (90:1) {};
	  \node[black,label={[label distance=0.1cm]180:$p-1$}] at (110:1){};	 
	  \node[black,label={[label distance=0.1cm]180:${p-i-1}$}] at (170:1){};
  	  \node[white,label={[label distance=0.1cm]255:$\frac{p+1}{2}$}] at (255:1) {};
	  \node[black,label={[label distance=0.1cm]285:$\frac{p-1}{2}$}]  at (285:1) {};
	  \node[black,label={[label distance=0.1cm]0:${\frac{p+1}{2}-i-1}$}] at (350:1) {};
    \end{tikzpicture}
    &\begin{tikzpicture}[scale=1]
  	  \path[draw,help lines] (0,0) circle (1 cm);
	  \tikzstyle{trait}=[dashed, draw=white,line width=0.75pt]
	  \tikzstyle{every node}=[shape=circle,fill=white,draw=black,minimum size=0.5pt,inner sep=0.5pt]
	  \tikzstyle{black}=[shape=circle,fill=black,draw=black,minimum size=0.5pt,inner sep=1.5pt]
	  \tikzstyle{white}=[shape=circle,fill=white,draw=black,minimum size=0.5pt,inner sep=1.5pt]
	  \draw[trait] (130:1) arc (130:190:1cm);
	  \draw[trait] (305:1) arc (305:370:1cm); 
	  \node[draw=none,fill=white,label={[label distance=0.1cm]90:$0$}] at (90:1) {\small{?}};
	  \node[white,label={[label distance=0.1cm]180:$p-1$}] at (110:1){};	 
	  \node[black,label={[label distance=0.1cm]180:$p-2$}] at (130:1){};	
	  \node[black,label={[label distance=0.1cm]180:${p-i-1}$}] at (190:1){};
  	  \node[white,label={[label distance=0.1cm]285:$\frac{p-1}{2}$}] at (285:1) {};
	  \node[black,label={[label distance=0.1cm]0:$\frac{p-1}{2}-1$}]  at (305:1) {};
	  \node[black,label={[label distance=0.1cm]0:${\frac{p+1}{2}-i-2}$}] at (370:1) {};
    \end{tikzpicture}\\
    (a) If $c(i+2)={\tt 0}$&$b=\alpha x+(\alpha+1)y$ & $b=\alpha y+ (\alpha+1)x$\\
    (b) If $c(i+2)={\tt 1}$&$b=\alpha x+(\alpha+1)y$ & $a=(\alpha -1)y+z+(\alpha+1)x$
	\end{tabular}
}
\end{center}
\caption{Rotations of the colouring $c$ of a Type1mod cycle and their corresponding weighted sums of black vertices depending on the colour $c(i+2)$.}\label{fig:type1mod_case1_3}
\end{figure}
 
 \smallskip
Therefore, we have $c(\frac{p+1}{2})={\tt 0}$ and $a=\alpha_x x+\alpha_y y+z$ as in the beginning. Observe that the previous reasoning means that for any integer $j$, we have
\begin{equation}\label{eq:type1mod}
c\circ\mathcal{R}_j(0)={\tt 1}=c\circ\mathcal{R}_j(1)\Rightarrow c\circ\mathcal{R}_j\left(\frac{p+1}{2}\right)={\tt 0}.
\end{equation}
With the colouring $c\circ\mathcal{R}_1$, the sum of the weights of black vertices is
$$a=(\alpha_x-1)y+z+\alpha_y x+x=(\alpha_y+1)x+(\alpha_x-1)y+z.$$
Since $x\ne y$, we get $\alpha_x=\alpha_y+1$. We set $\alpha:=\alpha_y$ for a shorter notation.
 
 Let $i$ be the smallest integer in $\{0,\ldots,\frac{p-1}{2}-1\}$ such that $c(i+1)={\tt 0}$. From Equation~\eqref{eq:type1mod}, we have $c(\frac{p+1}{2}+\ell)={\tt 0}$ for any $\ell\in\{0,\ldots,i-1\}$. Moreover, we have $c(\frac{p+1}{2}+i)={\tt 1}$. Indeed, assume that $c(\frac{p+1}{2}+i)={\tt 0}$ (Figure~\ref{fig:type1mod_case2_1}), then with the colouring $c\circ\mathcal{R}_{i+1}$ we obtain a sum of the weights of black vertices equal to $b=(\alpha+1)x+(\alpha+1)y$. As $c$ is a constant $2$-labelling, with the colouring $c\circ\mathcal{R}_{\frac{p+1}{2}}$, we have the same weighted sum $b$. Then it implies that the weighted sum $b$ with the colouring $c\circ\mathcal{R}_{\frac{p+1}{2}+1}$ has a different value, which is a contradiction. So  $c(\frac{p+1}{2}+i)={\tt 1}$ and with the colouring $c\circ\mathcal{R}_{i+1}$, we have a sum of the weights of black vertices equal to $b=\alpha x+(\alpha+2)y$ (Figure~\ref{fig:type1mod_case2_2}).

\begin{figure}[h!tbp]
\begin{center}
\scalebox{0.74}{
\begin{tabular}{ccccc}
 \begin{tikzpicture}[scale=1]
	 \node at (0,-2.3){$a=(\alpha+1)x+\alpha y+z$};\node at (0,-5){};
	  \path[draw,help lines] (0,0) circle (1 cm);
	  \tikzstyle{every node}=[shape=circle,fill=white,draw=black,minimum size=0.5pt,inner sep=0.5pt]
	  \tikzstyle{white}=[shape=circle,fill=white,draw=black,minimum size=0.5pt,inner sep=1.5pt]
	  \tikzstyle{black}=[shape=circle,fill=black,draw=black,minimum size=0.5pt,inner sep=1.5pt]
	  \tikzstyle{trait}=[dashed, draw=white,line width=0.75pt]
	  \draw[trait] (35:1) arc (35:85:1cm);
	  \draw[trait] (215:1) arc (215:255:1cm);
	  \node[white,label={[label distance=0.1cm]0:${i+1}$}] at (10:1){};	  
	  \node[black,label={[label distance=0.1cm]30:$i$}] at (30:1) {};
	  \node[black,label={[label distance=0.1cm]90:$0$}] at (90:1) {};
	  \node[white,label={[label distance=0.1cm]180:${\frac{p+1}{2}+i}$}] at (190:1) {};
	  \node[white,label={[label distance=0.2cm]180:${\frac{p+1}{2}+i-1}$}] at (210:1) {};
	  \node[white,label={[label distance=0.1cm]255:$\frac{p+1}{2}$}] at (255:1) {};
    \end{tikzpicture}
   &\begin{tikzpicture}[scale=1]
	  \node at (0,-2){$a=(\alpha+1)x+\alpha y+z$};
	  \node at (0,-6.5){$b=(\alpha+1)x+(\alpha+1)y$};
	  \node at (-2.5,-0.8) {$\mathcal{R}_i$};
   	  \draw[arrows={-stealth}, line width=1.5pt] (-2.5,-1.5) to (-1.5,-0.5);
	  \node at (-2.5,-3.7) {$\mathcal{R}_{\frac{p+1}{2}}$};
   	  \draw[arrows={-stealth}, line width=1.5pt] (-2.5,-3) to (-1.5,-4);   
   	  \begin{scope}[xshift=4.2cm]
   	  \node at (-2,0.5) {$\mathcal{R}_1$};
   	  \draw[arrows={-stealth}, line width=1.5pt] (-2.4,0) to (-1.4,0);
	  \node at (-2,-4) {$\mathcal{R}_1$};
   	  \draw[arrows={-stealth}, line width=1.5pt] (-2.4,-4.5) to (-1.4,-4.5);
   	  \end{scope}

   	  \path[draw,help lines] (0,0) circle (1 cm);
	  \tikzstyle{trait}=[dashed, draw=white,line width=0.75pt]
	  \tikzstyle{every node}=[shape=circle,fill=white,draw=black,minimum size=0.5pt,inner sep=0.5pt]
	  \tikzstyle{black}=[shape=circle,fill=black,draw=black,minimum size=0.5pt,inner sep=1.5pt]
	  \tikzstyle{white}=[shape=circle,fill=white,draw=black,minimum size=0.5pt,inner sep=1.5pt]
	  \draw[trait] (95:1) arc (95:145:1cm);
	  \draw[trait] (285:1) arc (285:325:1cm);
	  \node[white,label={[label distance=0.1cm]0:$1$}] at (70:1){};	  
	  \node[black,label={[label distance=0.1cm]90:$0$}] at (90:1) {};
	  \node[black,label={[label distance=0.1cm]180:${p-i}$}] at (150:1){};
	  \node[white,label={[label distance=0.1cm]255:$\frac{p+1}{2}$}] at (255:1) {};
	  \node[white,label={[label distance=0.1cm]285:$\frac{p-1}{2}$}]  at (285:1) {};
	  \node[white,label={[label distance=0.1cm]0:${\frac{p+1}{2}-i}$}] at (330:1) {};
	  
	  \begin{scope}[yshift=-4.5cm]
	  \path[draw,help lines] (0,0) circle (1 cm);
	  \tikzstyle{every node}=[shape=circle,fill=white,draw=black,minimum size=0.5pt,inner sep=0.5pt]
	  \tikzstyle{white}=[shape=circle,fill=white,draw=black,minimum size=0.5pt,inner sep=1.5pt]
	  \tikzstyle{black}=[shape=circle,fill=black,draw=black,minimum size=0.5pt,inner sep=1.5pt]
	  \tikzstyle{trait}=[dashed, draw=white,line width=0.75pt]
	  \draw[trait] (50:1) arc (50:85:1cm);
	  \draw[trait] (215:1) arc (215:280:1cm);
	  \node[white,label={[label distance=0.1cm]30:$i$}] at (30:1) {};
	  \node[white,label={[label distance=-0.2cm]90:${i-1}$}] at (50:1){};	  
	  \node[white,label={[label distance=0.1cm]90:$0$}] at (90:1) {};
	  \node[white,label={[label distance=0.1cm]180:${\frac{p+1}{2}+i}$}] at (190:1) {};
	  \node[black,label={[label distance=0.2cm]180:${\frac{p+1}{2}+i-1}$}] at (210:1) {};
	  \node[black,label={[label distance=0.1cm]285:$\frac{p-1}{2}$}]  at (285:1) {};
	  \end{scope}
    \end{tikzpicture}
    &\begin{tikzpicture}[scale=1]
      \node at (0,-2){$b=(\alpha+1)y+(\alpha+1)x$};
	  \node at (0,-6.5){$b=(\alpha+1)y+\alpha x+y$};
  	  \path[draw,help lines] (0,0) circle (1 cm);
	  \tikzstyle{trait}=[dashed, draw=white,line width=0.75pt]
	  \tikzstyle{every node}=[shape=circle,fill=white,draw=black,minimum size=0.5pt,inner sep=0.5pt]
	  \tikzstyle{black}=[shape=circle,fill=black,draw=black,minimum size=0.5pt,inner sep=1.5pt]
	  \tikzstyle{white}=[shape=circle,fill=white,draw=black,minimum size=0.5pt,inner sep=1.5pt]
	  \draw[trait] (110:1) arc (110:170:1cm);
	  \draw[trait] (305:1) arc (305:350:1cm); 
	  \node[white,label={[label distance=0.1cm]90:$0$}] at (90:1) {};
	  \node[black,label={[label distance=0.1cm]180:$p-1$}] at (110:1){};	 
	  \node[black,label={[label distance=0.1cm]180:${p-i-1}$}] at (170:1){};
	  \node[white,label={[label distance=0.1cm]285:$\frac{p-1}{2}$}]  at (285:1) {};
  	  \node[white,label={[label distance=0.1cm]0:$\frac{p-1}{2}-1$}] at (305:1) {};
	  \node[white,label={[label distance=0.1cm]0:${\frac{p+1}{2}-i-1}$}] at (350:1) {};
	  
	  \begin{scope}[yshift=-4.5cm]
	  \path[draw,help lines] (0,0) circle (1 cm);
	  \tikzstyle{every node}=[shape=circle,fill=white,draw=black,minimum size=0.5pt,inner sep=0.5pt]
	  \tikzstyle{white}=[shape=circle,fill=white,draw=black,minimum size=0.5pt,inner sep=1.5pt]
	  \tikzstyle{black}=[shape=circle,fill=black,draw=black,minimum size=0.5pt,inner sep=1.5pt]
	  \tikzstyle{trait}=[dashed, draw=white,line width=0.75pt]
	  \draw[trait] (50:1) arc (50:85:1cm);
	  \draw[trait] (240:1) arc (240:305:1cm);
	  \node[white,label={[label distance=0.1cm]0:${i-1}$}] at (50:1){};	  
	  \node[white,label={[label distance=0.1cm]90:$0$}] at (90:1) {};
	  \node[white,label={[label distance=0.1cm]180:$p-1$}] at (110:1){};
	  \node[white,label={[label distance=0.2cm]180:${\frac{p+1}{2}+i-1}$}] at (210:1) {};
	  \node[black,label={[label distance=0.1cm]180:${\frac{p+1}{2}+i-2}$}] at (240:1) {};
	  \node[black,label={[label distance=0.1cm]0:$\frac{p-1}{2}+1$}]  at (305:1) {};
	  \end{scope}
    \end{tikzpicture}
    \end{tabular}
}
\end{center}
\caption{Rotations of the colouring $c$ of a Type1mod cycle with $c(\frac{p+1}{2}+i)={\tt 0}$, and their corresponding weighted sums $b$ of black vertices which are not all equal.}\label{fig:type1mod_case2_1}
\end{figure}

\begin{figure}[h!tbp]
\begin{center}
\scalebox{0.75}{
\begin{tabular}{ccc}
 \begin{tikzpicture}[scale=1]
	  \path[draw,help lines] (0,0) circle (1 cm);
	  \tikzstyle{every node}=[shape=circle,fill=white,draw=black,minimum size=0.5pt,inner sep=0.5pt]
	  \tikzstyle{white}=[shape=circle,fill=white,draw=black,minimum size=0.5pt,inner sep=1.5pt]
	  \tikzstyle{black}=[shape=circle,fill=black,draw=black,minimum size=0.5pt,inner sep=1.5pt]
	  \tikzstyle{trait}=[dashed, draw=white,line width=0.75pt]
	  \draw[trait] (35:1) arc (35:85:1cm);
	  \draw[trait] (215:1) arc (215:255:1cm);
	  \node[white,label={[label distance=0.1cm]0:${i+1}$}] at (10:1){};	  
	  \node[black,label={[label distance=0.1cm]30:$i$}] at (30:1) {};
	  \node[black,label={[label distance=0.1cm]90:$0$}] at (90:1) {};
	  \node[black,label={[label distance=0.1cm]180:${\frac{p+1}{2}+i}$}] at (190:1) {};
	  \node[white,label={[label distance=0.2cm]180:${\frac{p+1}{2}+i-1}$}] at (210:1) {};
	  \node[white,label={[label distance=0.1cm]255:$\frac{p+1}{2}$}] at (255:1) {};
    \end{tikzpicture}
    &
    \begin{tikzpicture}[scale=1]
	  \node at (-2.5,0.5) {$\mathcal{R}_i$};
   	  \draw[arrows={-stealth}, line width=1.5pt] (-3,0) to (-2,0);
   	  \begin{scope}[xshift=4.2cm]
   	  \node at (-2,0.5) {$\mathcal{R}_1$};
   	  \draw[arrows={-stealth}, line width=1.5pt] (-2.4,0) to (-1.4,0);
   	  \end{scope}

   	  \path[draw,help lines] (0,0) circle (1 cm);
	  \tikzstyle{trait}=[dashed, draw=white,line width=0.75pt]
	  \tikzstyle{every node}=[shape=circle,fill=white,draw=black,minimum size=0.5pt,inner sep=0.5pt]
	  \tikzstyle{black}=[shape=circle,fill=black,draw=black,minimum size=0.5pt,inner sep=1.5pt]
	  \tikzstyle{white}=[shape=circle,fill=white,draw=black,minimum size=0.5pt,inner sep=1.5pt]
	  \draw[trait] (95:1) arc (95:145:1cm);
	  \draw[trait] (285:1) arc (285:325:1cm);
	  \node[white,label={[label distance=0.1cm]0:$1$}] at (70:1){};	  
	  \node[black,label={[label distance=0.1cm]90:$0$}] at (90:1) {};
	  \node[black,label={[label distance=0.1cm]180:${p-i}$}] at (150:1){};
	  \node[black,label={[label distance=0.1cm]255:$\frac{p+1}{2}$}] at (255:1) {};
	  \node[white,label={[label distance=0.1cm]285:$\frac{p-1}{2}$}]  at (285:1) {};
	  \node[white,label={[label distance=0.1cm]0:${\frac{p+1}{2}-i}$}] at (330:1) {};
	  \end{tikzpicture}
	  &
	  \begin{tikzpicture}[scale=1]
  	  \path[draw,help lines] (0,0) circle (1 cm);
	  \tikzstyle{trait}=[dashed, draw=white,line width=0.75pt]
	  \tikzstyle{every node}=[shape=circle,fill=white,draw=black,minimum size=0.5pt,inner sep=0.5pt]
	  \tikzstyle{black}=[shape=circle,fill=black,draw=black,minimum size=0.5pt,inner sep=1.5pt]
	  \tikzstyle{white}=[shape=circle,fill=white,draw=black,minimum size=0.5pt,inner sep=1.5pt]
	  \draw[trait] (110:1) arc (110:170:1cm);
	  \draw[trait] (305:1) arc (305:350:1cm); 
	  \node[white,label={[label distance=0.1cm]90:$0$}] at (90:1) {};
	  \node[black,label={[label distance=0.1cm]180:$p-1$}] at (110:1){};	 
	  \node[black,label={[label distance=0.1cm]180:${p-i-1}$}] at (170:1){};
	  \node[black,label={[label distance=0.1cm]285:$\frac{p-1}{2}$}]  at (285:1) {};
  	  \node[white,label={[label distance=0.1cm]0:$\frac{p-1}{2}-1$}] at (305:1) {};
	  \node[white,label={[label distance=0.1cm]0:${\frac{p+1}{2}-i-1}$}] at (350:1) {};
	  \end{tikzpicture}	  \\
	  $a=(\alpha+1)x+\alpha y+z$ & $a=(\alpha+1)x+\alpha y+z$ &  $b=(\alpha+1)y+(\alpha-1)x+y+x$
\end{tabular}
}
\end{center}
\caption{Rotations of the colouring $c$ of a Type1mod cycle with $c(\frac{p+1}{2}+i)={\tt1}$, and their corresponding weighted sums of black vertices.}\label{fig:type1mod_case2_2}
\end{figure}

From $b=\alpha x+(\alpha+2)y$, it follows that $i$ must be equal to $2$, otherwise the colouring $c\circ\mathcal{R}_{\frac{p+1}{2}}$ leads to a different sum of the weights of black vertices (Figure~\ref{fig:type1mod_case2_3}).
Then we have $c(3)={\tt 1}$ (Figure~\ref{fig:type1mod_case2_4}). Similarly $c(\frac{p+1}{2}+2)={\tt 1}$ (Figure~\ref{fig:type1mod_case2_5}).

\begin{figure}[h!tbp]
\begin{center}
\scalebox{0.75}{
\begin{tabular}{ccc}
 	\begin{tikzpicture}[scale=1]
	  \path[draw,help lines] (0,0) circle (1 cm);
	  \tikzstyle{every node}=[shape=circle,fill=white,draw=black,minimum size=0.5pt,inner sep=0.5pt]
	  \tikzstyle{white}=[shape=circle,fill=white,draw=black,minimum size=0.5pt,inner sep=1.5pt]
	  \tikzstyle{black}=[shape=circle,fill=black,draw=black,minimum size=0.5pt,inner sep=1.5pt]
	  \tikzstyle{trait}=[dashed, draw=white,line width=0.75pt]
	  \draw[trait] (35:1) arc (35:85:1cm);
	  \draw[trait] (215:1) arc (215:255:1cm);
	  \node[white,label={[label distance=0.1cm]0:${i+1}$}] at (10:1){};	  
	  \node[black,label={[label distance=0.1cm]30:$i$}] at (30:1) {};
	  \node[black,label={[label distance=0.1cm]90:$0$}] at (90:1) {};
	  \node[black,label={[label distance=0.1cm]90:$1$}] at (70:1) {};
	  \node[black,label={[label distance=0.1cm]180:${\frac{p+1}{2}+i}$}] at (190:1) {};
	  \node[white,label={[label distance=0.2cm]180:${\frac{p+1}{2}+i-1}$}] at (210:1) {};
	  \node[white,label={[label distance=0.1cm]255:$\frac{p+1}{2}$}] at (255:1) {};
    \end{tikzpicture}
    &     \begin{tikzpicture}[scale=1]
	  \node at (-2.5,0.7) {$\mathcal{R}_{\frac{p+1}{2}}$};
   	  \draw[arrows={-stealth}, line width=1.5pt] (-3,0.2) to (-2,0.2);
   	  \begin{scope}[xshift=4.2cm]
   	  \node at (-2,0.7) {$\mathcal{R}_1$};
   	  \draw[arrows={-stealth}, line width=1.5pt] (-2.4,0.2) to (-1.4,0.2);
   	  \end{scope}
   	  \path[draw,help lines] (0,0) circle (1 cm);
	  \tikzstyle{every node}=[shape=circle,fill=white,draw=black,minimum size=0.5pt,inner sep=0.5pt]
	  \tikzstyle{white}=[shape=circle,fill=white,draw=black,minimum size=0.5pt,inner sep=1.5pt]
	  \tikzstyle{black}=[shape=circle,fill=black,draw=black,minimum size=0.5pt,inner sep=1.5pt]
	  \tikzstyle{trait}=[dashed, draw=white,line width=0.75pt]
	  \draw[trait] (50:1) arc (50:85:1cm);
	  \draw[trait] (215:1) arc (215:255:1cm);
	  \node[black,label={[label distance=0.1cm]30:$i$}] at (30:1) {};
	  \node[white,label={[label distance=-0.2cm]90:${i-1}$}] at (50:1){};	  
	  \node[white,label={[label distance=0.1cm]90:$0$}] at (90:1) {};
	  \node[white,label={[label distance=0.1cm]180:${\frac{p+1}{2}+i}$}] at (190:1) {};
	  \node[black,label={[label distance=0.2cm]180:${\frac{p+1}{2}+i-1}$}] at (210:1) {};
	  \node[black,label={[label distance=0.1cm]255:$\frac{p+1}{2}$}] at (255:1) {};
	  \node[black,label={[label distance=0.1cm]285:$\frac{p-1}{2}$}]  at (285:1) {};
	  \end{tikzpicture}
	  & \begin{tikzpicture}[scale=1]
	  \path[draw,help lines] (0,0) circle (1 cm);
	  \tikzstyle{every node}=[shape=circle,fill=white,draw=black,minimum size=0.5pt,inner sep=0.5pt]
	  \tikzstyle{white}=[shape=circle,fill=white,draw=black,minimum size=0.5pt,inner sep=1.5pt]
	  \tikzstyle{black}=[shape=circle,fill=black,draw=black,minimum size=0.5pt,inner sep=1.5pt]
	  \tikzstyle{trait}=[dashed, draw=white,line width=0.75pt]
	  \draw[trait] (60:1) arc (60:85:1cm);
	  \draw[trait] (235:1) arc (235:285:1cm);
	  \node[black] at (60:1){};	  
	  \node[black,label={[label distance=0.1cm]0:${i-1}$}] at (40:1){};	  
	  \node[white,label={[label distance=0.1cm]90:$0$}] at (90:1) {};
	  \node[white,label={[label distance=0.1cm]180:$p-1$}] at (110:1){};
	  \node[white,label={[label distance=0.2cm]180:${\frac{p+1}{2}+i-1}$}] at (210:1) {};
	  \node[black,label={[label distance=0.1cm]180:${\frac{p+1}{2}+i-2}$}] at (230:1) {};
	  \node[black,label={[label distance=0.1cm]285:$\frac{p-1}{2}$}] at (285:1) {};
	  \node[black,label={[label distance=0.1cm]0:$\frac{p-1}{2}+1$}]  at (305:1) {};
    \end{tikzpicture}\\
    $a=(\alpha+1)x+\alpha y+z$ & $b=\alpha x+(\alpha+2) y$ & $b=\alpha y+(\alpha+1)x+y$
\end{tabular}}
\end{center}
\caption{Rotations of the colouring $c$ of a Type1mod cycle with $c(j)={\tt 1}$ for all $0\le j\le i$ with $i>1$, and their corresponding weighted sums of black vertices distinct which are not all equal.}\label{fig:type1mod_case2_3}
\end{figure}
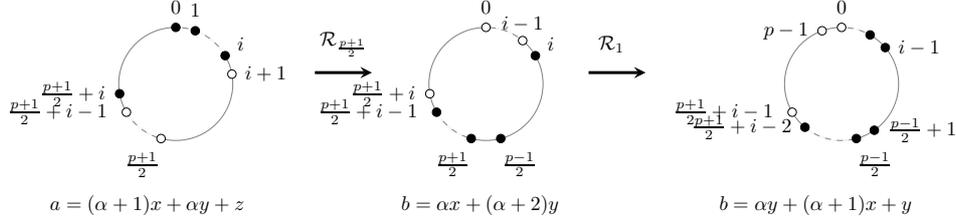

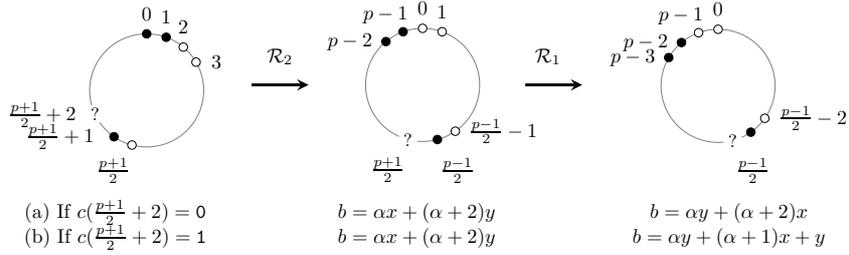
\begin{figure}[h!tbp]
\begin{center}\scalebox{0.75}{
\begin{tabular}{ccc}
\begin{tikzpicture}[scale=1]
	  \path[draw,help lines] (0,0) circle (1 cm);
	  \tikzstyle{every node}=[shape=circle,fill=white,draw=black,minimum size=0.5pt,inner sep=0.5pt]
	  \tikzstyle{white}=[shape=circle,fill=white,draw=black,minimum size=0.5pt,inner sep=1.5pt]
	  \tikzstyle{black}=[shape=circle,fill=black,draw=black,minimum size=0.5pt,inner sep=1.5pt]
	  \node[white,label={[label distance=0.1cm]90:$2$}] at (50:1){};	  
	  \node[black,label={[label distance=0.1cm]90:$0$}] at (90:1) {};
	  \node[black,label={[label distance=0.1cm]90:$1$}] at (70:1) {};
	  \node[white,label={[label distance=0.1cm]0:$3$}] at (30:1) {};
	  \node[black,label={[label distance=0.2cm]180:${\frac{p+1}{2}+1}$}] at (235:1) {};
	  \node[white,label={[label distance=0.1cm]255:$\frac{p+1}{2}$}] at (255:1) {};
	 \node[draw=white,fill=white,label={[label distance=0.1cm]180:${\frac{p+1}{2}+2}$}] at (205:1){\small{?}};	
    \end{tikzpicture}
    &     \begin{tikzpicture}[scale=1]
		  \node at (-2.5,0.5) {$\mathcal{R}_2$};
   	  \draw[arrows={-stealth}, line width=1.5pt] (-3,0) to (-2,0);
   	  \begin{scope}[xshift=4.2cm]
   	  \node at (-2,0.5) {$\mathcal{R}_1$};
   	  \draw[arrows={-stealth}, line width=1.5pt] (-2.4,0) to (-1.4,0);
   	  \end{scope}
   	  \path[draw,help lines] (0,0) circle (1 cm);
	  \tikzstyle{every node}=[shape=circle,fill=white,draw=black,minimum size=0.5pt,inner sep=0.5pt]
	  \tikzstyle{white}=[shape=circle,fill=white,draw=black,minimum size=0.5pt,inner sep=1.5pt]
	  \tikzstyle{black}=[shape=circle,fill=black,draw=black,minimum size=0.5pt,inner sep=1.5pt]
	  \node[black,label={[label distance=0.1cm]180:$p-2$}] at (130:1){};	  
	  \node[white,label={[label distance=0.1cm]90:$0$}] at (90:1) {};
	  \node[white,label={[label distance=0.1cm]90:$1$}] at (70:1) {};
	  \node[black,label={[label distance=-0.1cm]110:$p-1$}] at (110:1) {};
	  \node[black,label={[label distance=0.2cm]285:${\frac{p-1}{2}}$}] at (285:1) {};
	  \node[white,label={[label distance=0.1cm]0:$\frac{p-1}{2}-1$}] at (305:1) {};
	 \node[draw=white,fill=white,label={[label distance=0.1cm]255:${\frac{p+1}{2}}$}] at (255:1){\small{?}};	
    \end{tikzpicture}
	  &
	  \begin{tikzpicture}[scale=1]
   	  \path[draw,help lines] (0,0) circle (1 cm);
	  \tikzstyle{every node}=[shape=circle,fill=white,draw=black,minimum size=0.5pt,inner sep=0.5pt]
	  \tikzstyle{white}=[shape=circle,fill=white,draw=black,minimum size=0.5pt,inner sep=1.5pt]
	  \tikzstyle{black}=[shape=circle,fill=black,draw=black,minimum size=0.5pt,inner sep=1.5pt]
	  \node[black,label={[label distance=0.1cm]180:$p-2$}] at (130:1){};	  
	  \node[white,label={[label distance=0.1cm]90:$0$}] at (90:1) {};
	  \node[black,label={[label distance=0.1cm]180:$p-3$}] at (150:1) {};
	  \node[white,label={[label distance=-0.1cm]110:$p-1$}] at (110:1) {};
	  \node[white,label={[label distance=0.1cm]0:${\frac{p-1}{2}-2}$}] at (325:1) {};
	  \node[black] at (305:1) {};
	 \node[draw=white,fill=white,label={[label distance=0.1cm]285:${\frac{p-1}{2}}$}] at (285:1){\small{?}};	
    \end{tikzpicture}\\
    (a) If $c(\frac{p+1}{2}+2)={\tt0}$& $b=\alpha x+(\alpha+2)y$ & $b=\alpha y+(\alpha+2)x$\\
    (b) If $c(\frac{p+1}{2}+2)={\tt1}$& $b=\alpha x+(\alpha+2)y$ & $b=\alpha y+(\alpha+1)x+y$  
\end{tabular}}
\end{center}
\caption{Rotations of the colouring $c$ of a Type1mod cycle with $c(0)=c(1)={\tt 1}$, \mbox{$c(\frac{p+1}{2}+1)={\tt 1}$} and $c(3)=c(\frac{p+1}{2})={\tt 0}$, and their corresponding weighted sums of black vertices depending on the colour $c(\frac{p+1}{2}+2)$.}\label{fig:type1mod_case2_4}
\end{figure}

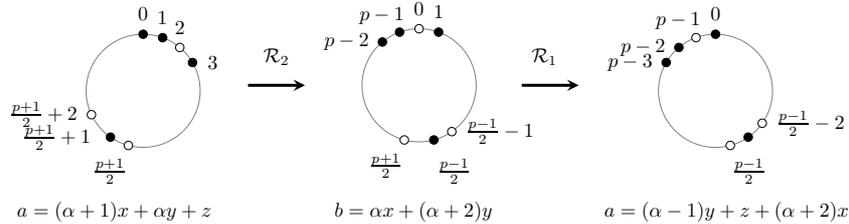
\begin{figure}[h!tbp]
\begin{center}\scalebox{0.75}{
\begin{tabular}{ccc}
\begin{tikzpicture}[scale=1]
	  \path[draw,help lines] (0,0) circle (1 cm);
	  \tikzstyle{every node}=[shape=circle,fill=white,draw=black,minimum size=0.5pt,inner sep=0.5pt]
	  \tikzstyle{white}=[shape=circle,fill=white,draw=black,minimum size=0.5pt,inner sep=1.5pt]
	  \tikzstyle{black}=[shape=circle,fill=black,draw=black,minimum size=0.5pt,inner sep=1.5pt]
	  \node[white,label={[label distance=0.1cm]90:$2$}] at (50:1){};	  
	  \node[black,label={[label distance=0.1cm]90:$0$}] at (90:1) {};
	  \node[black,label={[label distance=0.1cm]90:$1$}] at (70:1) {};
	  \node[black,label={[label distance=0.1cm]0:$3$}] at (30:1) {};
	  \node[black,label={[label distance=0.2cm]180:${\frac{p+1}{2}+1}$}] at (235:1) {};
	  \node[white,label={[label distance=0.1cm]255:$\frac{p+1}{2}$}] at (255:1) {};
	 \node[white,label={[label distance=0.1cm]180:${\frac{p+1}{2}+2}$}] at (205:1){};	
    \end{tikzpicture}
    &     \begin{tikzpicture}[scale=1]
		  \node at (-2.5,0.5) {$\mathcal{R}_2$};
   	  \draw[arrows={-stealth}, line width=1.5pt] (-3,0) to (-2,0);
   	  \begin{scope}[xshift=4.2cm]
   	  \node at (-2,0.5) {$\mathcal{R}_1$};
   	  \draw[arrows={-stealth}, line width=1.5pt] (-2.4,0) to (-1.4,0);
   	  \end{scope}
   	  \path[draw,help lines] (0,0) circle (1 cm);
	  \tikzstyle{every node}=[shape=circle,fill=white,draw=black,minimum size=0.5pt,inner sep=0.5pt]
	  \tikzstyle{white}=[shape=circle,fill=white,draw=black,minimum size=0.5pt,inner sep=1.5pt]
	  \tikzstyle{black}=[shape=circle,fill=black,draw=black,minimum size=0.5pt,inner sep=1.5pt]
	  \node[black,label={[label distance=0.1cm]180:$p-2$}] at (130:1){};	  
	  \node[white,label={[label distance=0.1cm]90:$0$}] at (90:1) {};
	  \node[black,label={[label distance=0.1cm]90:$1$}] at (70:1) {};
	  \node[black,label={[label distance=-0.1cm]110:$p-1$}] at (110:1) {};
	  \node[black,label={[label distance=0.2cm]285:${\frac{p-1}{2}}$}] at (285:1) {};
	  \node[white,label={[label distance=0.1cm]0:$\frac{p-1}{2}-1$}] at (305:1) {};
	 \node[white,label={[label distance=0.1cm]255:${\frac{p+1}{2}}$}] at (255:1){};	
    \end{tikzpicture}
	  &
	  \begin{tikzpicture}[scale=1]
   	  \path[draw,help lines] (0,0) circle (1 cm);
	  \tikzstyle{every node}=[shape=circle,fill=white,draw=black,minimum size=0.5pt,inner sep=0.5pt]
	  \tikzstyle{white}=[shape=circle,fill=white,draw=black,minimum size=0.5pt,inner sep=1.5pt]
	  \tikzstyle{black}=[shape=circle,fill=black,draw=black,minimum size=0.5pt,inner sep=1.5pt]
	  \node[black,label={[label distance=0.1cm]180:$p-2$}] at (130:1){};	  
	  \node[black,label={[label distance=0.1cm]90:$0$}] at (90:1) {};
	  \node[black,label={[label distance=0.1cm]180:$p-3$}] at (150:1) {};
	  \node[white,label={[label distance=-0.1cm]110:$p-1$}] at (110:1) {};
	  \node[white,label={[label distance=0.1cm]0:${\frac{p-1}{2}-2}$}] at (325:1) {};
	  \node[black] at (305:1) {};
	 \node[white,label={[label distance=0.1cm]285:${\frac{p-1}{2}}$}] at (285:1){};	
    \end{tikzpicture}\\
    $a=(\alpha+1)x+\alpha y+z$& $b=\alpha x+(\alpha+2)y$ & $a=(\alpha-1) y+z+(\alpha+2)x$\\
\end{tabular}}
\end{center}
\caption{Rotations of the colouring $c$ of a Type1mod cycle with $c(0)=c(1)={\tt 1}$, \mbox{$c(\frac{p+1}{2}+1)={\tt 1}$} and $c(3)=c(\frac{p+1}{2})=c(\frac{p+1}{2}+2)={\tt 0}$, and their corresponding weighted sums of black vertices which are not all equal.}\label{fig:type1mod_case2_5}
\end{figure}

Therefore, the colouring $c\circ\mathcal{R}_{\frac{p+1}{2}+1}$ has the same configuration as the colouring $c$, i.e., the vertices $0,1$ are black and the vertex $\frac{p+1}{2}$ is white. We can apply the same argument as before. Hence, the colouring $c$ must be $3$-periodic of pattern period ${\tt 110}$ and the number $p$ of vertices is such that $p\equiv0\pmod{3}$.
\end{proof}

\begin{remark}\label{rem:3weights}
In the previous proof, we used the following fact. Let $c$ be a constant $2$-labelling of a cycle of Type1mod with $x\ne y$. If the sum of the weights of black vertices is equal to $a=\alpha_x x+\alpha_y y +z$ with the colouring $c$, where $\alpha_x,\ \alpha_y$ respectively denote the black vertices with weight $x$ and weight $y$, then for any colouring $c\circ\mathcal{R}_j$ such that $c\circ\mathcal{R}_j(0)={\tt 1}$, the weighted sum is $a=\alpha_x x+\alpha_y y +z$ and $\alpha_x$, $\alpha_y$ respectively denote the numbers of black vertices of weight $x$ and weight $y$ with respect to the colouring $c\circ\mathcal{R}_j$. In other words, the number of black vertices of weight $x$ (respectively $y$) is the same for the colouring $c\circ\mathcal{R}_j$ such that $c\circ\mathcal{R}_j(0)={\tt 1}$. 
This fact follows from the uniqueness of the solution $(\lambda,\mu)$ of the system
 $$\left\{
    \begin{array}{l}
    a-z= \lambda x +\mu y\\
    n-1= \lambda + \mu
    \end{array}\right.$$
where $n$ denotes the total number of black vertices (which is known from the colouring $c$).

The same argument holds when the weighted sum is $b=\alpha_x x+\alpha_y y$ and the colouring $c\circ\mathcal{R}_j$ such that $c\circ\mathcal{R}_j(0)={\tt 0}$. 
\end{remark}

Cycles of Type1mod and Type3mod share some similarities. Both types have at most $3$ distinct weights and their non-trivial constant $2$-labellings are the same as shown in the next lemma. We omit the proof here since it follows exactly the same lines as the proof of Lemma~\ref{prop_type5}, but the details can be found in \cite[Appendix~B]{Vandomme--thesis}.

\begin{lemma}\label{prop_type6}
 For cycles $\mathcal{C}_p$ of Type3mod, i.e., $z(xy)^\frac{p-3}{4}xx(yx)^\frac{p-3}{4}$ with $x\neq y$ and 
 $3<p\in\mathbb{N}$, if $ c$ is a non-trivial constant $2$-labelling, then $p\equiv0\pmod{3}$ and $ c$ is $3$-periodic 
 of pattern period ${\tt110}$.
\end{lemma}

Now for cycles of Type2mod and Type4mod with weights $z$, $x$, $y$, $t$, 
if the number $n$ of black vertices and the values
$a:=z+\alpha_x x +\alpha_y y+\alpha_t t$ and $b:=\beta_x x+\beta_y y+\beta_t t$ are known, then the following system 
    \begin{equation*}
    \left\{
    \begin{array}{l}
    a= \lambda x +\mu y +\nu t+z\\
    n= \lambda + \mu+\nu+ 1
    \end{array}\right.
    \qquad
    \left(\text{respectively } 
    \left\{
    \begin{array}{l}
    b= \lambda x +\mu y +\nu t\\
    n= \lambda + \mu+\nu
    \end{array} 
    \right)\right.
    \end{equation*}
does not necessarily have a unique solution $(\lambda,\mu,\nu)=(\alpha_x,\alpha_y,\alpha_t)$ 
(resp. $(\lambda,\mu,\nu)$ $=$  $(\beta_x,\beta_y,\beta_t)$). 
Hence for these cycles, it is be important to make a distinction between the colouring $ c$
and its rotations. 

We first deal with an easy particular case of these cycles that corresponds to a Type2mod cycle with $t=x\neq y$ or to a Type4mod cycle with $t=y\neq x$.

\begin{lemma}\label{prop_type4}
 Let $p\ge4$ be an integer such that $p\equiv0\pmod{2}$. Let $\mathcal{C}_p$ be a cycle of
 of Type2mod with $t=x\ne y$ or of Type4mod with $t=y\neq x$, i.e., $\mathcal{C}_p$ is a cycle represented by $z(xy)^\frac{p-2}{2}x$ with $x\neq y$.
 Any non-trivial constant $2$-labelling $c$ of $\mathcal{C}_p$ is 
 either the alternate colouring, 
 or a colouring such that the number $\alpha_x$ of black vertices of weight $x$ is equal to $\alpha_y+c(0)$ where $\alpha_y$ is the number of black vertices of weight $y$.
\end{lemma}

\begin{proof}
	Let $p\ge4$ be an integer such that $p\equiv0\pmod{2}$ and let $\mathcal{C}_p$ be a cycle 
	represented by $z(xy)^\frac{p-2}{2}x$ with $x\neq y$. Clearly the alternate colouring is a 
    constant $2$-labelling with $a=(\frac{p}{2}-1)y+z$ and $b=\frac{p}{2}x$.

    Now assume that $c$ is a non-trivial constant $2$-labelling of $\mathcal{C}_p$ which is not 
    the alternate colouring. Without loss of generality, we assume that the vertices $0$ and $1$ are both coloured in black. Let $\alpha_x$, $\alpha_y$ denote respectively the number of black vertices with weight $x$ and $y$ for the colouring $c$. We have $a=\alpha_x x +\alpha_y y +z$ as the sum of the weights of black vertices. For the colouring $c\circ\mathcal{R}_1$, the weighted sum is equal to 
    $$a=(\alpha_x-1) y +z + \alpha_y x+x=(\alpha_y+1)x+(\alpha_x-1)y+z.$$ 
    As $x\ne y$, we get $\alpha_x=\alpha_y+1$ and we set $\alpha:=\alpha_y$. 
    
    Let $i$ be the smallest integer in $\{0,\ldots,p-2\}$ such that $c(i+1)={\tt 0}$. The weighted sum for the colouring $c\circ\mathcal{R}_{i}$ is $a=(\alpha+1)x+\alpha y+z$ by hypothesis. Therefore, the weighted sum for the colouring $c\circ\mathcal{R}_{i+1}$ is equal to $b=(\alpha+1)y+\alpha x+x=(\alpha+1)x+(\alpha+1)y$. Moreover, the weighted sum is preserved for the colouring $c\circ \mathcal{R}_{i+2}$ regardless to the colour of the vertex $i+2$:
    $$\left\{\begin{array}{ll}
    b=(\alpha+1)y+(\alpha+1)x &\text{if } c(i+2)={\tt 0}\\
    a=(\alpha)y+z+(\alpha+1)y &\text{if } c(i+2)={\tt 1}.\\
    \end{array}\right.$$

    It follows that the only condition on the constant $2$-labelling $c$ is to be a colouring with $\alpha_x=\alpha_y+1$ if $c(0)={\tt 1}$. Similarly, the condition is $\alpha_x=\alpha_y$ if $c(0)={\tt0}$.
\end{proof}

We now consider the Type2mod cycles in general.

\begin{lemma}\label{prop_type7}
 Let $p\equiv 2\pmod{4}$ with $p>2$ and let $\mathcal{C}_p$ be a weighted cycle of Type2mod represented by $z(xy)^\frac{p-2}{4}t(yx)^\frac{p-2}{4}$ where the weights  $x,y,t$ are not all equal.
  If $c$ is a non-trivial constant $2$-labelling, then $c$ is one of the following colouring  
   \begin{itemize}
  \item alternate,
  \item $\frac{p}{2}$-periodic,
  \item if $x=y$, $\frac{p}{2}$-anti-periodic,
  \item  if $t=x$, any colouring such that the number of black vertices of weight $x$ is equal to the sum of $c(0)$ and the number of black vertices of weight $y$.
\end{itemize}   
\end{lemma}

\begin{proof}
Let $p\equiv 2\pmod{4}$ with $p>2$ and let $\mathcal{C}_p$ be a weighted cycle of Type2mod represented by $z(xy)^\frac{p-2}{4}t(yx)^\frac{p-2}{4}$ where the weights  $x,y,t$ are not all equal. Clearly, the alternate colouring is a constant $2$-labelling with $a=(\frac{p}{2}-1)y+z$ and $b=(\frac{p}{2}-1)x+t$. 

The case where the weights $t$ and $x$ are equal follows from Lemma~\ref{prop_type4}. Hence, we suppose from now on that $t\ne x$. Consider a non-trivial constant $2$-labelling $c$ of $\mathcal{C}_p$ that is not the alternate colouring. Without loss of generality, we may assume that $c(0)=c(1)={\tt1}$. We let $\alpha_x,\alpha_y,\alpha_t$ denote respectively the number of black vertices of weight $x$ and $y$ for the colouring $c$. The sum of the weights of the black vertices is then equal to $a=\alpha_x x+\alpha_y y+\alpha_t t+z$. We consider the colour of the vertex $\frac{p}{2}+1$.

Assume first that $c(\frac{p}{2}+1)={\tt1}$. It follows that $c(\frac{p}{2})={\tt1}$ and $\alpha_t=1$, otherwise the weighted sum is not preserved (Figure~\ref{fig:type2mod_case1_1}). Then for the colouring $c\circ\mathcal{R}_1$, the sum of the weights of the black vertices is
$$a=(\alpha_x-1)y+z+(\alpha_y-1)x+t+x+y=\alpha_y x+\alpha_x y+t+z$$
as under a $1$-rotation, any black vertex with weight $x$ becomes a black vertex of weight $y$, except for the vertex $1$ which becomes the vertex $0$ with weight $z$, and similarly any black vertex with weight $y$ becomes a black vertex of weight $x$, except for the vertex $\frac{p}{2}+1$ which becomes the vertex $\frac{p}{2}$ with weight $t$. If the weights $x$ and $y$ are distinct, then $\alpha_x=\alpha_y$ and we set $\alpha:=\alpha_x$. Otherwise, we denote by $\beta$ the number $\alpha_x+\alpha_y$ of black vertices with weight $x=y$.

\begin{figure}[h!tbp]
\begin{center}
\scalebox{0.74}{
	\begin{tabular}{ccccc}
	 \begin{tikzpicture}[scale=1]
	  \path[draw,help lines] (0,0) circle (1 cm);
	  \tikzstyle{every node}=[shape=circle,fill=white,draw=black,minimum size=0.5pt,inner sep=0.5pt]
	  \tikzstyle{white}=[shape=circle,fill=white,draw=black,minimum size=0.5pt,inner sep=1.5pt]
	  \tikzstyle{black}=[shape=circle,fill=black,draw=black,minimum size=0.5pt,inner sep=1.5pt]	
	  \node[black,label={[label distance=0.1cm]90:$0$}] at (90:1) {};
	  \node[black,label={[label distance=0.1cm]0:$1$}] at (70:1){};	
	  \node[black,label={[label distance=0.2cm]180:${\frac{p}{2}+1}$}] at (250:1) {};
	  \node[white,label={[label distance=0.1cm]270:${\frac{p}{2}}$}] at (270:1) {};
    \end{tikzpicture}
   &\begin{tikzpicture}
	  \node at (1,0){};
	  \node at (1,2) {$\mathcal{R}_{\frac{p}{2}}$};
   	  \draw[arrows={-stealth}, line width=1.5pt] (0.5,1) to (1.5,1);
    \end{tikzpicture}&
  	 \begin{tikzpicture}[scale=1]
	  \path[draw,help lines] (0,0) circle (1 cm);
	  \tikzstyle{every node}=[shape=circle,fill=white,draw=black,minimum size=0.5pt,inner sep=0.5pt]
	  \tikzstyle{white}=[shape=circle,fill=white,draw=black,minimum size=0.5pt,inner sep=1.5pt]
	  \tikzstyle{black}=[shape=circle,fill=black,draw=black,minimum size=0.5pt,inner sep=1.5pt]	
	  \node[white,label={[label distance=0.1cm]90:$0$}] at (90:1) {};
	  \node[black,label={[label distance=0.1cm]0:$1$}] at (70:1){};	
	  \node[black,label={[label distance=0.2cm]180:${\frac{p}{2}+1}$}] at (250:1) {};
	  \node[black,label={[label distance=0.1cm]270:${\frac{p}{2}}$}] at (270:1) {};
    \end{tikzpicture}&
    \begin{tikzpicture}
	  \node at (1,0){};
	  \node at (1,2) {$\mathcal{R}_1$};
   	  \draw[arrows={-stealth}, line width=1.5pt] (0.5,1) to (1.5,1);
    \end{tikzpicture}
    &  	 
    \begin{tikzpicture}[scale=1]
	  \path[draw,help lines] (0,0) circle (1 cm);
	  \tikzstyle{every node}=[shape=circle,fill=white,draw=black,minimum size=0.5pt,inner sep=0.5pt]
	  \tikzstyle{white}=[shape=circle,fill=white,draw=black,minimum size=0.5pt,inner sep=1.5pt]
	  \tikzstyle{black}=[shape=circle,fill=black,draw=black,minimum size=0.5pt,inner sep=1.5pt]	
	  \node[black,label={[label distance=0.1cm]90:$0$}] at (90:1) {};
	  \node[white,label={[label distance=0.1cm]180:$p-1$}] at (110:1){};	
	  \node[black,label={[label distance=0.2cm]0:${\frac{p}{2}-1}$}] at (290:1) {};
	  \node[black,label={[label distance=0.1cm]270:${\frac{p}{2}}$}] at (270:1) {};
    \end{tikzpicture}\\
    $a=\alpha_x x+\alpha_y y+z$ && $b=\alpha_x y+\alpha_y x+t$ && $a=(\alpha_x-1)x+t+(\alpha_y-1)y+z+y$
	\end{tabular}
}
\end{center}
\caption{Rotations of the colouring $c$ of a Type2mod cycle with $c(\frac{p}{2}+1)={\tt1}$, and their corresponding weighted sums of black vertices which are not all equal as $x\ne t$.}\label{fig:type2mod_case1_1}
\end{figure}

Let $i$ be the smallest integer in $\{0,\ldots,\frac{p}{2}-1\}$ such that $ c(i+1)={\tt0}$ and assume that $c(\frac{p}{2}+i)={\tt1}$ for any $\ell\in\{0,\ldots,i\}$ (otherwise, consider the colouring $c\circ \mathcal{R}_{\frac{p}{2}}$ instead of $c$). Then $c(\frac{p}{2}+i+1)={\tt0}$ as depicted in Figure~\ref{fig:type2mod_case1_2}. 

 \begin{figure}[h!tbp]
 \begin{center}
 \scalebox{0.75}{
 \begin{tabular}{c c c}
  \begin{tikzpicture}[scale=1]
	  \path[draw,help lines] (0,0) circle (1 cm);
	  \tikzstyle{every node}=[shape=circle,fill=white,draw=black,minimum size=0.5pt,inner sep=0.5pt]
	  \tikzstyle{white}=[shape=circle,fill=white,draw=black,minimum size=0.5pt,inner sep=1.5pt]
	  \tikzstyle{black}=[shape=circle,fill=black,draw=black,minimum size=0.5pt,inner sep=1.5pt]
	  \tikzstyle{trait}=[dashed, draw=white,line width=0.75pt]
	  \draw[trait] (35:1) arc (35:85:1cm);
	  \draw[trait] (215:1) arc (215:265:1cm);
	  \node[white] at (10:1)[label=right:${i+1}$] {};	  
	  \node[black] at (30:1)[label=right:$i$] {};
	  \node[black] at (90:1) [label=above:$0$]{};
	  \node[black] at (190:1) [label=left:${\frac{p}{2}+i+1}$]{};
	  \node[black] at (210:1) [label=left:${\frac{p}{2}+i}$]{};
	  \node[black] at (270:1) [label=below:$\frac{p}{2}$]{};

    \end{tikzpicture}
   &\begin{tikzpicture}
	  \node at (1,0){};
	  \node at (1,2) {$\mathcal{R}_i$};
   	  \draw[arrows={-stealth}, line width=1.5pt] (0,1) to (2,1);
    \end{tikzpicture}
   &\begin{tikzpicture}[scale=1]
  	  \path[draw,help lines] (0,0) circle (1 cm);
	  \tikzstyle{trait}=[dashed, draw=white,line width=0.75pt]
	  \tikzstyle{every node}=[shape=circle,fill=white,draw=black,minimum size=0.5pt,inner sep=0.5pt]
	  \tikzstyle{black}=[shape=circle,fill=black,draw=black,minimum size=0.5pt,inner sep=1.5pt]
	  \tikzstyle{white}=[shape=circle,fill=white,draw=black,minimum size=0.5pt,inner sep=1.5pt]
	  \draw[trait] (95:1) arc (95:145:1cm);
	  \draw[trait] (275:1) arc (275:325:1cm);
	  \node[white] at (70:1)[label=right:$\phantom{1}1$] {};	  
	  \node[black] at (90:1)[label=above:$0$] {};
	  \node[black] at (150:1) [label=left:${p-i}$]{};
	  \node[black] at (250:1) [label=left:${\frac{p}{2}+1\phantom{1}}$]{};
	  \node[black] at (270:1) [label=below:${\frac{p}{2}}$]{};
	  \node[black] at (330:1) [label=right:${\frac{p}{2}-i}$]{};
    \end{tikzpicture}\\
    (a) $a=\alpha x+\alpha y+z+t$ & & (a) $a=\alpha x+\alpha y+z+t$\\ 
        (b) $a=\beta x+z+t$ & & (b) $a=\beta x+z+t$\\ 
      & &\\
	& & \begin{tikzpicture}
	  \node at (-1.5,0){};	
	  \node at (1,0.5) {$\mathcal{R}_\frac{p}{2}$};
   	  \draw[arrows={-stealth}, line width=1.5pt] (0,1) to (0,0);
    \end{tikzpicture}\\
    \begin{tikzpicture}[scale=1]
  	  \path[draw,help lines] (0,0) circle (1 cm);
	  \tikzstyle{trait}=[dashed, draw=white,line width=0.75pt]
	  \tikzstyle{every node}=[shape=circle,fill=white,draw=black,minimum size=0.5pt,inner sep=0.5pt]
	  \tikzstyle{black}=[shape=circle,fill=black,draw=black,minimum size=0.5pt,inner sep=1.5pt]
	  \tikzstyle{white}=[shape=circle,fill=white,draw=black,minimum size=0.5pt,inner sep=1.5pt]
	  \draw[trait] (115:1) arc (115:145:1cm);
	  \draw[trait] (295:1) arc (295:345:1cm);
	  \node[white] at (90:1)[label=above:$0$] {};	  
	  \node[black] at (110:1)[label=left:${p-1}$] {};
	  \node[black] at (170:1) [label=left:${p-i-1}$]{};
	  \node[black] at (270:1) [label=below:${\frac{p}{2}}$]{};
	  \node[black] at (290:1) [label=right:$\phantom{1}{\frac{p}{2}-1}$]{};
	  \node[black] at (350:1) [label=right:${\frac{p}{2}-i-1}$]{};
    \end{tikzpicture}  &\begin{tikzpicture}
	  \node at (1,0){};
	  \node at (1,2) {$\mathcal{R}_1$};
   	  \draw[arrows={-stealth}, line width=1.5pt] (2,1) to (0,1);
    \end{tikzpicture}
    &  \begin{tikzpicture}[scale=1]
  	  \path[draw,help lines] (0,0) circle (1 cm);
	  \tikzstyle{trait}=[dashed, draw=white,line width=0.75pt]
	  \tikzstyle{every node}=[shape=circle,fill=white,draw=black,minimum size=0.5pt,inner sep=0.5pt]
	  \tikzstyle{black}=[shape=circle,fill=black,draw=black,minimum size=0.5pt,inner sep=1.5pt]
	  \tikzstyle{white}=[shape=circle,fill=white,draw=black,minimum size=0.5pt,inner sep=1.5pt]
	  \draw[trait] (95:1) arc (95:145:1cm);
	  \draw[trait] (275:1) arc (275:325:1cm);
	  \node[black] at (70:1)[label=right:$\phantom{1}1$] {};	  
	  \node[black] at (90:1)[label=above:$0$] {};
	  \node[black] at (150:1) [label=left:${p-i}$]{};
	  \node[white] at (250:1) [label=left:${\frac{p}{2}+1\phantom{1}}$]{};
	  \node[black] at (270:1) [label=below:${\frac{p}{2}}$]{};
	  \node[black] at (330:1) [label=right:${\frac{p}{2}-i}$]{};
    \end{tikzpicture}  \\
      (a) $a=\alpha x+\alpha y+z+x$ & & (a) $a=\alpha y+\alpha x+t+z$\\ 
     (b) $a=(\beta+1)x+z$ && (b) $a=\beta x+t+z$
 \end{tabular}}
 \end{center}
 \caption{Rotations of the colouring $c$ of a Type2mod cycle $\mathcal{C}_p$ with $c(\frac{p}{2}+i+1)={\tt1}$, and their corresponding weighted sums of black vertices which are not all equal, where the line (a) corresponds to the case $x\ne y$ and the line (b) to the case $x=y$.}\label{fig:type2mod_case1_2}
\end{figure}   

With the colouring $c\circ\mathcal{R}_{i+1}$ we obtain (Figure~\ref{fig:type2mod_case1_3}) a sum of the weights of black vertices equal to 
$$b=\left\{\begin{array}{ll}
(\alpha+1) (x+y) &\text{if } x\ne y\\
(\beta+2) x &\text{if } x=y
\end{array}\right.$$
and the number of black vertices of weight $x$ for the colouring $c\circ\mathcal{R}_{i+1}$ is actually $\alpha+1$ (respectively $\beta+2$) when $x\ne y$ (resp. $x= y$). 
Observe that if $c(i+2)={\tt 0}=c(\frac{p}{2}+i+2)$, then the weighted sum $b$ for the colouring $c\circ\mathcal{R}_{i+2}$ is preserved. 

\begin{figure}[h!tbp]
\begin{center}
\scalebox{0.75}{
 \begin{tabular}{c c c}
  \begin{tikzpicture}[scale=1]
	  \path[draw,help lines] (0,0) circle (1 cm);
	  \tikzstyle{every node}=[shape=circle,fill=white,draw=black,minimum size=0.5pt,inner sep=0.5pt]
	  \tikzstyle{white}=[shape=circle,fill=white,draw=black,minimum size=0.5pt,inner sep=1.5pt]
	  \tikzstyle{black}=[shape=circle,fill=black,draw=black,minimum size=0.5pt,inner sep=1.5pt]
	  \tikzstyle{trait}=[dashed, draw=white,line width=0.75pt]
	  \draw[trait] (35:1) arc (35:85:1cm);
	  \draw[trait] (215:1) arc (215:265:1cm);
	  \node[white] at (10:1)[label=right:${i+1}$] {};	  
	  \node[black] at (30:1)[label=right:$i$] {};
	  \node[black] at (90:1) [label=above:$0$]{};
	  \node[white] at (190:1) [label=left:${\frac{p}{2}+i+1}$]{};
	  \node[black] at (210:1) [label=left:${\frac{p}{2}+i}$]{};
	  \node[black] at (270:1) [label=below:$\frac{p}{2}$]{};
    \end{tikzpicture}
   &\begin{tikzpicture}[scale=1]
   		  \node at (-2.5,0.5) {$\mathcal{R}_i$};
   	  \draw[arrows={-stealth}, line width=1.5pt] (-3,0) to (-2,0);
   	  \begin{scope}[xshift=4.2cm]
   	  \node at (-2,0.5) {$\mathcal{R}_1$};
   	  \draw[arrows={-stealth}, line width=1.5pt] (-2.4,0) to (-1.4,0);
   	  \end{scope}
   	  
  	  \path[draw,help lines] (0,0) circle (1 cm);
	  \tikzstyle{trait}=[dashed, draw=white,line width=0.75pt]
	  \tikzstyle{every node}=[shape=circle,fill=white,draw=black,minimum size=0.5pt,inner sep=0.5pt]
	  \tikzstyle{black}=[shape=circle,fill=black,draw=black,minimum size=0.5pt,inner sep=1.5pt]
	  \tikzstyle{white}=[shape=circle,fill=white,draw=black,minimum size=0.5pt,inner sep=1.5pt]
	  \draw[trait] (95:1) arc (95:145:1cm);
	  \draw[trait] (275:1) arc (275:325:1cm);
	  \node[white] at (70:1)[label=right:$\phantom{1}1$] {};	  
	  \node[black] at (90:1)[label=above:$0$] {};
	  \node[black] at (150:1) [label=left:${p-i}$]{};
	  \node[white] at (250:1) [label=left:${\frac{p}{2}+1\phantom{1}}$]{};
	  \node[black] at (270:1) [label=below:${\frac{p}{2}}$]{};
	  \node[black] at (330:1) [label=right:${\frac{p}{2}-i}$]{};
    \end{tikzpicture}
       &\begin{tikzpicture}[scale=1]
  	  \path[draw,help lines] (0,0) circle (1 cm);
	  \tikzstyle{trait}=[dashed, draw=white,line width=0.75pt]
	  \tikzstyle{every node}=[shape=circle,fill=white,draw=black,minimum size=0.5pt,inner sep=0.5pt]
	  \tikzstyle{black}=[shape=circle,fill=black,draw=black,minimum size=0.5pt,inner sep=1.5pt]
	  \tikzstyle{white}=[shape=circle,fill=white,draw=black,minimum size=0.5pt,inner sep=1.5pt]
	  \draw[trait] (115:1) arc (115:165:1cm);
	  \draw[trait] (295:1) arc (295:345:1cm);
	  \node[white] at (90:1)[label=above:$0$] {};
	  \node[black] at (110:1) [label=left:$p-1$]{};
	  \node[black] at (170:1) [label=left:${p-i-1}$]{};
	  \node[white] at (270:1) [label=below:${\frac{p}{2}}$]{};
	  \node[black] at (290:1) [label=right:${\phantom{1}\frac{p}{2}+1}$]{};
	  \node[black] at (350:1) [label=right:${\frac{p}{2}-i-1}$]{};
	 \end{tikzpicture}\\
    (a) If $x\ne y$ &$a=\alpha x+\alpha y +t+z$ & $b=\alpha y +\alpha x +x +y$\\
    (b) If $x= y$ &$a=\beta x +t+z$ & $b=\beta x +x +x$\\
    \end{tabular}
}
\end{center}
\caption{Rotations of the colouring $c$ of a Type2mod cycle and their corresponding weighted sums of black vertices depending on the equality of the weights $x$ and $y$.}\label{fig:type2mod_case1_3}
\end{figure}
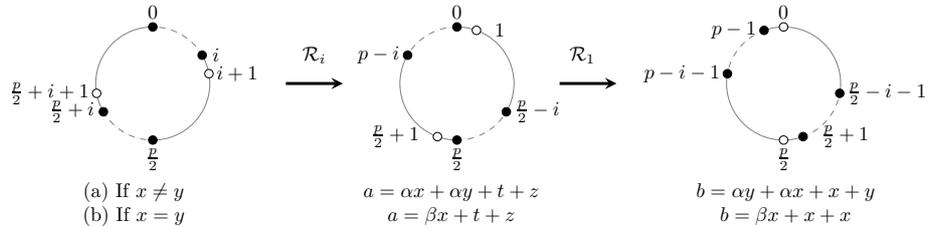

Therefore, let $j$ be the smallest integer in $\{i+1,\ldots,\frac{p}{2}-1\}$ such that $c(j+1)={\tt1}$. Without loss of generality, we assume that $ c(\frac{p}{2}+\ell)={\tt0}$ for all $\ell\in\{i+1,\ldots,j\}$. Then $c\left(\frac{p}{2}+j+1\right)={\tt1}$, otherwise it implies that $x=t$ which is a contradiction (Figure~\ref{fig:type2mod_case1_4}). 
 
 \begin{figure}[h!tbp]
 \begin{center}
 \scalebox{0.75}{
 \begin{tabular}{ccc}
  	  \begin{tikzpicture}[scale=1]
	  \path[draw,help lines] (0,0) circle (1 cm);
	  \tikzstyle{every node}=[shape=circle,fill=white,draw=black,minimum size=0.5pt,inner sep=0.5pt]
	  \tikzstyle{white}=[shape=circle,fill=white,draw=black,minimum size=0.5pt,inner sep=1.5pt]
	  \tikzstyle{black}=[shape=circle,fill=black,draw=black,minimum size=0.5pt,inner sep=1.5pt]
	  \tikzstyle{trait}=[dashed, draw=white,line width=0.75pt]
	  \draw[trait] (35:1) arc (35:85:1cm);
	  \draw[trait] (215:1) arc (215:265:1cm);
	  \draw[trait] (325:1) arc (325:370:1cm);
	  \draw[trait] (145:1) arc (145:185:1cm);
	  \node[white] at (320:1) [label=right:${j}$]{};
	  \node[black] at (305:1) [label={[label distance=0cm]355:${j+1}$}]{};
	  \node[white] at (10:1)[label=right:${i+1}$] {};	  
	  \node[black] at (30:1)[label=right:$i$] {};
	  \node[black] at (90:1) [label=above:$0$]{};
	  \node[white] at (190:1) [label=left:${\frac{p}{2}+i+1}$]{};
	  \node[black] at (210:1) [label=left:${\frac{p}{2}+i}$]{};
	  \node[black] at (270:1) [label=below:$\frac{p}{2}$]{};
	  \node[white] at (140:1) [label={[label distance=0cm]180:}]{};
	  \node[white] at (125:1) [label={[label distance=0cm]180:${\frac{p}{2}+j+1}$}]{};
    \end{tikzpicture}
   &  	   	  \begin{tikzpicture}[scale=1]
      		  \node at (-2.5,0.5) {$\mathcal{R}_j$};
   	  \draw[arrows={-stealth}, line width=1.5pt] (-3,0) to (-2,0);
   	  \begin{scope}[xshift=4.2cm]
   	  \node at (-2,0.5) {$\mathcal{R}_1$};
   	  \draw[arrows={-stealth}, line width=1.5pt] (-2.4,0) to (-1.4,0);
   	  \end{scope}
   	  
	  \path[draw,help lines] (0,0) circle (1 cm);
	  \tikzstyle{every node}=[shape=circle,fill=white,draw=black,minimum size=0.5pt,inner sep=0.5pt]
	  \tikzstyle{white}=[shape=circle,fill=white,draw=black,minimum size=0.5pt,inner sep=1.5pt]
	  \tikzstyle{black}=[shape=circle,fill=black,draw=black,minimum size=0.5pt,inner sep=1.5pt]
	  \tikzstyle{trait}=[dashed, draw=white,line width=0.75pt]
	  \draw[trait] (165:1) arc (165:215:1cm);
	  \draw[trait] (345:1) arc (345:395:1cm);
	  \draw[trait] (455:1) arc (455:500:1cm);
	  \draw[trait] (275:1) arc (275:315:1cm);
 	\node[white] at (90:1) [label=above:${0}$]{};
	  \node[black] at (75:1) [label={[label distance=0cm]5:${1}$}]{};
	  \node[white] at (140:1) {};	  
	  \node[black] at (160:1) {};
	  \node[black] at (220:1) [label=left:$p-j$]{};
	  \node[white] at (320:1){};
	  \node[black] at (340:1) {};
	  \node[black] at (40:1) [label={[label distance=0cm]0:${\frac{p}{2}-j}$}]{};
	  \node[white] at (270:1) [label=below:$\frac{p}{2}$]{};
	  \node[white] at (255:1) [label={[label distance=0cm]200:${\frac{p}{2}+1}$}]{};
    \end{tikzpicture}
       &  	   	  \begin{tikzpicture}[scale=1]
	  \path[draw,help lines] (0,0) circle (1 cm);
	  \tikzstyle{every node}=[shape=circle,fill=white,draw=black,minimum size=0.5pt,inner sep=0.5pt]
	  \tikzstyle{white}=[shape=circle,fill=white,draw=black,minimum size=0.5pt,inner sep=1.5pt]
	  \tikzstyle{black}=[shape=circle,fill=black,draw=black,minimum size=0.5pt,inner sep=1.5pt]
	  \tikzstyle{trait}=[dashed, draw=white,line width=0.75pt]
	  \draw[trait] (180:1) arc (180:230:1cm);
	  \draw[trait] (360:1) arc (360:410:1cm);
	  \draw[trait] (470:1) arc (470:515:1cm);
	  \draw[trait] (290:1) arc (290:330:1cm);
 	\node[white] at (105:1) [label={[label distance=-0.1cm]170:${p-1}$}]{};
	  \node[black] at (90:1) [label=above:${0}$]{};
	  \node[white] at (155:1) {};	  
	  \node[black] at (175:1) {};
	  \node[black] at (235:1) [label=left:$p-j-1$]{};
	  \node[white] at (335:1){};
	  \node[black] at (355:1) {};
	  \node[black] at (55:1) [label={[label distance=0cm]0:${\frac{p}{2}-j-1}$}]{};
	  \node[white] at (285:1) [label={[label distance=0cm]350:${\frac{p}{2}-1}$}]{};
	  \node[white] at (270:1) [label=below:$\frac{p}{2}$]{};
    \end{tikzpicture}\\
 (a) $a=\alpha x+\alpha y +t+z$  & $b=(\alpha+1)(x+y)$ & $a=\alpha y +z+(\alpha+1)x$\\
 (b) $a=\beta x +t+z$ & $b=(\beta+2)x$ & $a=(\beta+1)x+z$
 \end{tabular}
 }
 \end{center}
 \caption{Rotations of the colouring $c$ of a Type2mod cycle with $c(j+1)\neq c(\frac{p}{2}+j+1)={\tt0}$, and their corresponding weighted sums of black vertices which are not all equal, where the line (a) corresponds to the case $x\ne y$ and the line (b) to the case $x=y$.}\label{fig:type2mod_case1_4}
 \end{figure}
	
Consequently, the sum of the weights of the black vertices for the colouring $c\circ\mathcal{R}_{j+1}$ is $a=\alpha x+\alpha y+z+t$ (respectively $a=\beta x+t+z$) if the weights $x$ and $y$ are distinct (resp. equal). Moreover, the colourings $c$ and $c\circ\mathcal{R}_{j+1}$ present the same configuration as $c(j+1)={\tt1}=c(\frac{p}{2}+j+1)$ and as the weighted sums are equal. Hence, we can apply the same reasoning given before for $c$ to the colouring $c\circ\mathcal{R}_{j+1}$.
It follows that the colouring $c$ is $\frac{p}{2}$-periodic. In particular, we have the following weighted sums
  $$\left\{\begin{array}{ll}
  a=\alpha (x+ y)+z+t\text{ and } b=(\alpha+1)(x+y) &\text{if }x\ne y\\
  a=\beta x+t+z\text{ and } b=(\beta+2)x&\text{if }x=y
  \end{array}\right.$$
with $\beta$ even.

\medskip

Assume now that $c(\frac{p}{2}+1)={\tt0}$. It follows that $c(\frac{p}{2})={\tt0}$. Indeed, suppose that $c(\frac{p}{2})={\tt1}$, i.e., $\alpha_t=1$. If the weights $x$ and $y$ are distinct, then the weighted sum $a=\alpha_x x+\alpha_y y+t+z$ for the colouring $c$ implies that the weighted sum for the colouring $c\circ\mathcal{R}_{\frac{p}{2}}$ is equal to $a=\alpha_x y+\alpha_y x+t+z.$
Hence, $\alpha_x=\alpha_y$ as $c$ is a constant $2$-labelling. Then the weighted sum for the colouring $c\circ\mathcal{R}_1$ is given by
$$a=(\alpha_x-1)y+z+\alpha_x x+y+x=(\alpha_x+1)x+\alpha_x y +z$$
which is not equal to the initial weighted sum as $t\ne x$. This is a contradiction.
Now, if the weights $x$ and $y$ are equal, then the weighted sum $a=(\alpha_x+\alpha_y)x+t+z$ for the colouring $c$ implies that the weighted sum for the colouring $c\circ\mathcal{R}_{1}$ is equal to 
$$a=(\alpha_x+\alpha_y-1)x+z+x+x=(\alpha_x+\alpha_y+1)x+z$$
which is a contradiction (as $t\ne x$).

So $c(\frac{p}{2}+1)={\tt0}=c(\frac{p}{2})$ and $\alpha_t=0$. For the colouring $c\circ\mathcal{R}_1$, we obtain the weighted sum $a=(\alpha_x-1)y+z+\alpha_y x+ x$ as depicted in Figure~\ref{fig:type2mod_case2_1}. Hence, $\alpha_x$ must be equal to $\alpha_y+1$ if the weights $x$ and $y$ are distinct. In this case, we set $\alpha=\alpha_y$. In the case where $x=y$, we simply set $\beta=\alpha_x+\alpha_y$. Hence,
$$\left\{\begin{array}{ll}
a=(\alpha+1)x+\alpha y+z &\text{if }x\ne y\\
a=\beta x+z&\text{if }x=y.
\end{array} \right.$$
We obtain the following weighted sum (Figure~\ref{fig:type2mod_case2_1}) for the colouring $c\circ\mathcal{R}_{\frac{p}{2}}$ 
$$\left\{\begin{array}{ll}
b=(\alpha+1)y+\alpha x+t &\text{if }x\ne y\\
b=\beta x+t&\text{if }x=y.
\end{array} \right.$$

\begin{figure}[h!tbp]
\begin{center}
\scalebox{0.75}{
	\begin{tabular}{ccccc}
	 \begin{tikzpicture}[scale=1]
	  \path[draw,help lines] (0,0) circle (1 cm);
	  \tikzstyle{every node}=[shape=circle,fill=white,draw=black,minimum size=0.5pt,inner sep=0.5pt]
	  \tikzstyle{white}=[shape=circle,fill=white,draw=black,minimum size=0.5pt,inner sep=1.5pt]
	  \tikzstyle{black}=[shape=circle,fill=black,draw=black,minimum size=0.5pt,inner sep=1.5pt]	
	  \node[white,label={[label distance=0.1cm]90:$0$}] at (90:1) {};
	  \node[white,label={[label distance=0.1cm]0:$1$}] at (70:1){};	
	  \node[black,label={[label distance=0.2cm]180:${\frac{p}{2}+1}$}] at (250:1) {};
	  \node[black,label={[label distance=0.1cm]270:${\frac{p}{2}}$}] at (270:1) {};
    \end{tikzpicture}
   &\begin{tikzpicture}
	  \node at (1,0){};
	  \node at (1,2) {$\mathcal{R}_{\frac{p}{2}}$};
   	  \draw[arrows={-stealth}, line width=1.5pt] (1.5,1) to (0.5,1);
    \end{tikzpicture}&
  	 \begin{tikzpicture}[scale=1]
	  \path[draw,help lines] (0,0) circle (1 cm);
	  \tikzstyle{every node}=[shape=circle,fill=white,draw=black,minimum size=0.5pt,inner sep=0.5pt]
	  \tikzstyle{white}=[shape=circle,fill=white,draw=black,minimum size=0.5pt,inner sep=1.5pt]
	  \tikzstyle{black}=[shape=circle,fill=black,draw=black,minimum size=0.5pt,inner sep=1.5pt]	
	  \node[black,label={[label distance=0.1cm]90:$0$}] at (90:1) {};
	  \node[black,label={[label distance=0.1cm]0:$1$}] at (70:1){};	
	  \node[white,label={[label distance=0.2cm]180:${\frac{p}{2}+1}$}] at (250:1) {};
	  \node[white,label={[label distance=0.1cm]270:${\frac{p}{2}}$}] at (270:1) {};
    \end{tikzpicture}&
    \begin{tikzpicture}
	  \node at (1,0){};
	  \node at (1,2) {$\mathcal{R}_1$};
   	  \draw[arrows={-stealth}, line width=1.5pt] (0.5,1) to (1.5,1);
    \end{tikzpicture}
    &  	 
    \begin{tikzpicture}[scale=1]
	  \path[draw,help lines] (0,0) circle (1 cm);
	  \tikzstyle{every node}=[shape=circle,fill=white,draw=black,minimum size=0.5pt,inner sep=0.5pt]
	  \tikzstyle{white}=[shape=circle,fill=white,draw=black,minimum size=0.5pt,inner sep=1.5pt]
	  \tikzstyle{black}=[shape=circle,fill=black,draw=black,minimum size=0.5pt,inner sep=1.5pt]	
	  \node[black,label={[label distance=0.1cm]90:$0$}] at (90:1) {};
	  \node[black,label={[label distance=0.1cm]180:$p-1$}] at (110:1){};	
	  \node[white,label={[label distance=0.2cm]0:${\frac{p}{2}-1}$}] at (290:1) {};
	  \node[white,label={[label distance=0.1cm]270:${\frac{p}{2}}$}] at (270:1) {};
    \end{tikzpicture}\\
    $b=\alpha_x y+\alpha_y x+t$ && $a=\alpha_x x+\alpha_y y+z$ && $a=(\alpha_x-1)y+z+\alpha_y x+x$
	\end{tabular}
}
\end{center}
\caption{Rotations of the colouring $c$ of a Type2mod cycle with $c(\frac{p}{2})={\tt0}=c(\frac{p}{2}+1)$, and their corresponding weighted sums of black vertices.}\label{fig:type2mod_case2_1}
\end{figure}

      Let $i$ be the smallest integer in $\{0,\ldots,\frac{p}{2}-1\}$ such that $c(i+1)={\tt0}$. 
      We may assume that $c\left(\frac{p}{2}+\ell\right)={\tt0}$ for all $\ell\in\{0,\ldots,i\}.$      
      Then $c(\frac{p}{2}+i+1)={\tt1}$, otherwise we obtain a contradiction as $x\ne t$ (Figure~\ref{fig:type2mod_case2_2}).

\begin{figure}[h!tbp]
\begin{center}
\scalebox{0.75}{
 \begin{tabular}{c c c}
  \begin{tikzpicture}[scale=1]
	  \path[draw,help lines] (0,0) circle (1 cm);
	  \tikzstyle{every node}=[shape=circle,fill=white,draw=black,minimum size=0.5pt,inner sep=0.5pt]
	  \tikzstyle{white}=[shape=circle,fill=white,draw=black,minimum size=0.5pt,inner sep=1.5pt]
	  \tikzstyle{black}=[shape=circle,fill=black,draw=black,minimum size=0.5pt,inner sep=1.5pt]
	  \tikzstyle{trait}=[dashed, draw=white,line width=0.75pt]
	  \draw[trait] (35:1) arc (35:85:1cm);
	  \draw[trait] (215:1) arc (215:265:1cm);
	  \node[white] at (10:1)[label=right:${i+1}$] {};	  
	  \node[black] at (30:1)[label=right:$i$] {};
	  \node[black] at (90:1) [label=above:$0$]{};
	  \node[white] at (190:1) [label=left:${\frac{p}{2}+i+1}$]{};
	  \node[white] at (210:1) [label=left:${\frac{p}{2}+i}$]{};
	  \node[white] at (270:1) [label=below:$\frac{p}{2}$]{};
    \end{tikzpicture}
   &\begin{tikzpicture}[scale=1]
   		  \node at (-2.5,0.5) {$\mathcal{R}_i$};
   	  \draw[arrows={-stealth}, line width=1.5pt] (-3,0) to (-2,0);
   	  \begin{scope}[xshift=4.2cm]
   	  \node at (-2,0.5) {$\mathcal{R}_1$};
   	  \draw[arrows={-stealth}, line width=1.5pt] (-2.4,0) to (-1.4,0);
   	  \end{scope}
   	  
  	  \path[draw,help lines] (0,0) circle (1 cm);
	  \tikzstyle{trait}=[dashed, draw=white,line width=0.75pt]
	  \tikzstyle{every node}=[shape=circle,fill=white,draw=black,minimum size=0.5pt,inner sep=0.5pt]
	  \tikzstyle{black}=[shape=circle,fill=black,draw=black,minimum size=0.5pt,inner sep=1.5pt]
	  \tikzstyle{white}=[shape=circle,fill=white,draw=black,minimum size=0.5pt,inner sep=1.5pt]
	  \draw[trait] (95:1) arc (95:145:1cm);
	  \draw[trait] (275:1) arc (275:325:1cm);
	  \node[white] at (70:1)[label=right:$\phantom{1}1$] {};	  
	  \node[black] at (90:1)[label=above:$0$] {};
	  \node[black] at (150:1) [label=left:${p-i}$]{};
	  \node[white] at (250:1) [label=left:${\frac{p}{2}+1\phantom{1}}$]{};
	  \node[white] at (270:1) [label=below:${\frac{p}{2}}$]{};
	  \node[white] at (330:1) [label=right:${\frac{p}{2}-i}$]{};
    \end{tikzpicture}
       &\begin{tikzpicture}[scale=1]
  	  \path[draw,help lines] (0,0) circle (1 cm);
	  \tikzstyle{trait}=[dashed, draw=white,line width=0.75pt]
	  \tikzstyle{every node}=[shape=circle,fill=white,draw=black,minimum size=0.5pt,inner sep=0.5pt]
	  \tikzstyle{black}=[shape=circle,fill=black,draw=black,minimum size=0.5pt,inner sep=1.5pt]
	  \tikzstyle{white}=[shape=circle,fill=white,draw=black,minimum size=0.5pt,inner sep=1.5pt]
	  \draw[trait] (115:1) arc (115:165:1cm);
	  \draw[trait] (295:1) arc (295:345:1cm);
	  \node[white] at (90:1)[label=above:$0$] {};
	  \node[black] at (110:1) [label=left:$p-1$]{};
	  \node[black] at (170:1) [label=left:${p-i-1}$]{};
	  \node[white] at (270:1) [label=below:${\frac{p}{2}}$]{};
	  \node[white] at (290:1) [label=right:${\phantom{1}\frac{p}{2}+1}$]{};
	  \node[white] at (350:1) [label=right:${\frac{p}{2}-i-1}$]{};
	 \end{tikzpicture}\\
	(a) If $x\ne y$&	 $a=(\alpha+1)x+\alpha y+z$ & $b=(\alpha+1) y+\alpha x+ x$\\
	(b) If $x=y$&	 $a=\beta x+z$ & $b=\beta x+x$
	 \end{tabular}
	 }
	 \end{center}
	 \caption{Rotations of the colouring $c$ of a Type2mod cycle and their corresponding weighted sums of black vertices depending on the equality of the weights $x$ and $y$.}\label{fig:type2mod_case2_2}
	 \end{figure}
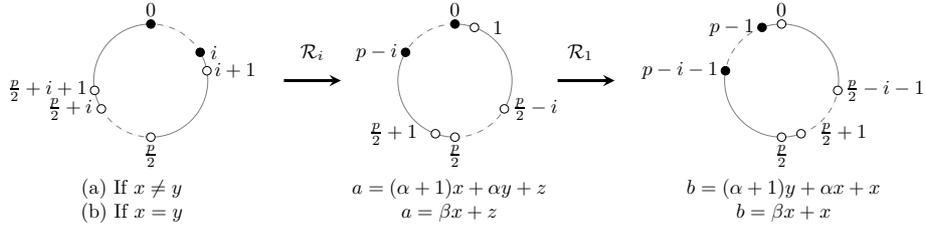
 
 Observe that if $c(i+2)={\tt0}$ and $c(\frac{p}{2}+i+2)={\tt1}$, then the weighted sum $b$ for the colouring $c\circ\mathcal{R}_{i+2}$ is preserved. Therefore, let $j$ be the smallest integer in $\{i+1,\ldots,\frac{p}{2}+i\}$ such that $ c(j+1)={\tt1}$. Without loss of generality, we suppose that $ c\left(\frac{p}{2}+\ell\right)={\tt1}$ for all $\ell\in\{i+1,\ldots,j\}$. It follows that $c\left(\frac{p}{2}+j+1\right)={\tt 0}$. Indeed, $ c\left(\frac{p}{2}+j+1\right)={\tt1}$ leads to a contradiction as $x\ne t$ (Figure~\ref{fig:type2mod_case2_3}).
      
  \begin{figure}[h!tbp]
 \begin{center}
 \scalebox{0.75}{
 \begin{tabular}{ccc}
  	  \begin{tikzpicture}[scale=1]
	  \path[draw,help lines] (0,0) circle (1 cm);
	  \tikzstyle{every node}=[shape=circle,fill=white,draw=black,minimum size=0.5pt,inner sep=0.5pt]
	  \tikzstyle{white}=[shape=circle,fill=white,draw=black,minimum size=0.5pt,inner sep=1.5pt]
	  \tikzstyle{black}=[shape=circle,fill=black,draw=black,minimum size=0.5pt,inner sep=1.5pt]
	  \tikzstyle{trait}=[dashed, draw=white,line width=0.75pt]
	  \draw[trait] (35:1) arc (35:85:1cm);
	  \draw[trait] (215:1) arc (215:265:1cm);
	  \draw[trait] (325:1) arc (325:370:1cm);
	  \draw[trait] (145:1) arc (145:185:1cm);
	  \node[white] at (320:1) [label=right:${j}$]{};
	  \node[black] at (305:1) [label={[label distance=0cm]355:${j+1}$}]{};
	  \node[white] at (10:1)[label=right:${i+1}$] {};	  
	  \node[black] at (30:1)[label=right:$i$] {};
	  \node[black] at (90:1) [label=above:$0$]{};
	  \node[black] at (190:1) [label=left:${\frac{p}{2}+i+1}$]{};
	  \node[white] at (210:1) [label=left:${\frac{p}{2}+i}$]{};
	  \node[white] at (270:1) [label=below:$\frac{p}{2}$]{};
	  \node[black] at (140:1) [label={[label distance=0cm]180:}]{};
	  \node[black] at (125:1) [label={[label distance=0cm]180:${\frac{p}{2}+j+1}$}]{};
    \end{tikzpicture}
   &  	   	  \begin{tikzpicture}[scale=1]
      		  \node at (-2.5,0.5) {$\mathcal{R}_j$};
   	  \draw[arrows={-stealth}, line width=1.5pt] (-3,0) to (-2,0);
   	  \begin{scope}[xshift=4.2cm]
   	  \node at (-2,0.5) {$\mathcal{R}_1$};
   	  \draw[arrows={-stealth}, line width=1.5pt] (-2.4,0) to (-1.4,0);
   	  \end{scope}
   	  
	  \path[draw,help lines] (0,0) circle (1 cm);
	  \tikzstyle{every node}=[shape=circle,fill=white,draw=black,minimum size=0.5pt,inner sep=0.5pt]
	  \tikzstyle{white}=[shape=circle,fill=white,draw=black,minimum size=0.5pt,inner sep=1.5pt]
	  \tikzstyle{black}=[shape=circle,fill=black,draw=black,minimum size=0.5pt,inner sep=1.5pt]
	  \tikzstyle{trait}=[dashed, draw=white,line width=0.75pt]
	  \draw[trait] (165:1) arc (165:215:1cm);
	  \draw[trait] (345:1) arc (345:395:1cm);
	  \draw[trait] (455:1) arc (455:500:1cm);
	  \draw[trait] (275:1) arc (275:315:1cm);
 	\node[white] at (90:1) [label=above:${0}$]{};
	  \node[black] at (75:1) [label={[label distance=0cm]5:${1}$}]{};
	  \node[white] at (140:1) {};	  
	  \node[black] at (160:1) {};
	  \node[black] at (220:1) [label=left:$p-j$]{};
	  \node[black] at (320:1){};
	  \node[white] at (340:1) {};
	  \node[white] at (40:1) [label={[label distance=0cm]0:${\frac{p}{2}-j}$}]{};
	  \node[black] at (270:1) [label=below:$\frac{p}{2}$]{};
	  \node[black] at (255:1) [label={[label distance=0cm]200:${\frac{p}{2}+1}$}]{};
    \end{tikzpicture}
       &  	   	  \begin{tikzpicture}[scale=1]
	  \path[draw,help lines] (0,0) circle (1 cm);
	  \tikzstyle{every node}=[shape=circle,fill=white,draw=black,minimum size=0.5pt,inner sep=0.5pt]
	  \tikzstyle{white}=[shape=circle,fill=white,draw=black,minimum size=0.5pt,inner sep=1.5pt]
	  \tikzstyle{black}=[shape=circle,fill=black,draw=black,minimum size=0.5pt,inner sep=1.5pt]
	  \tikzstyle{trait}=[dashed, draw=white,line width=0.75pt]
	  \draw[trait] (180:1) arc (180:230:1cm);
	  \draw[trait] (360:1) arc (360:410:1cm);
	  \draw[trait] (470:1) arc (470:515:1cm);
	  \draw[trait] (290:1) arc (290:330:1cm);
 	\node[white] at (105:1) [label={[label distance=-0.1cm]170:${p-1}$}]{};
	  \node[black] at (90:1) [label=above:${0}$]{};
	  \node[white] at (155:1) {};	  
	  \node[black] at (175:1) {};
	  \node[black] at (235:1) [label=left:$p-j-1$]{};
	  \node[black] at (335:1){};
	  \node[white] at (355:1) {};
	  \node[white] at (55:1) [label={[label distance=0cm]0:${\frac{p}{2}-j-1}$}]{};
	  \node[black] at (285:1) [label={[label distance=0cm]350:${\frac{p}{2}-1}$}]{};
	  \node[black] at (270:1) [label=below:$\frac{p}{2}$]{};
    \end{tikzpicture}\\
 (a) $a=(\alpha+1) x+\alpha y +z$  & $b=\alpha x+(\alpha+1)y+t$ & $a=(\alpha-1)y+z+\alpha x+t+y$\\
 (b) $a=\beta x +z$ & $b=\beta x+t$ & $a=(\beta-2) x+z+t+x$
 \end{tabular}
 }
 \end{center}
 \caption{Rotations of the colouring $c$ of a Type2mod cycle with $c(j+1)\neq c(\frac{p}{2}+j+1)={\tt0}$, and their corresponding weighted sums of black vertices which are not all equal, where the line (a) corresponds to the case $x\ne y$ and the line (b) to the case $x=y$.}\label{fig:type2mod_case2_3}
 \end{figure}     
 
 Therefore, the sum of the weights of the black vertices for the colouring $ c\circ\mathcal{R}_{j+1}$ is $a=(\alpha+1)x+\alpha y +z$ (respectively $a=\beta x+z$) if the weights $x$ and $y$ are distinct (resp. equal). Hence, the colourings $c$ and $c\circ\mathcal{R}_{j+1}$ present the same configuration as $c(j+1)={\tt1}$ and $c(\frac{p}{2}+j+1)={\tt0}$ and as the weighted sums are equal. It follows that the colouring $c$ is $\frac{p}{2}$-anti-periodic. 

If the weights $x$ and $y$ are distinct, then the number of black vertices is equal to $2\alpha+2=\frac{p}{2}$. It means that $\frac{p}{2}$ is even which is a contradiction as $p\equiv2\pmod{4}$. Thus, there doest not exist a constant $2$-labelling in this case.

If the weights $x$ and $y$ are equal, then any $\frac{p}{2}$-anti-periodic colouring is a constant $2$ labelling with 
$$a=\left(\frac{p}{2}-1\right) x+z\text{ and } b=\left(\frac{p}{2}-1\right) x +t.$$

\end{proof}

The last type of cycles is similar to Type2mod cycles. Hence, the proof of the following lemma is similar to the proof of Lemma~\ref{prop_type7}. Details of the proof are available in \cite[Appendix~B]{Vandomme--thesis}.

\begin{lemma}\label{prop_type8}
  Let $p\equiv 4\pmod{4}$ with $p>4$ and let $\mathcal{C}_p$ be a weighted cycle of Type4mod represented by $z(xy)^\frac{p-4}{4}xtx(yx)^\frac{p-4}{4}$ where the weights  $x,y,t$ are not all equal.
  If $c$ is a non-trivial constant $2$-labelling, then $ c$ is one of the following colouring
   \begin{itemize}
  \item alternate,
  \item $\frac{p}{2}$-anti-periodic,
  \item $\frac{p}{2}$-periodic if $x=y$; $\frac{p}{2}$-periodic and such that the numbers of black vertices of weight $x$ and $y$ are equal when $ c(0)={\tt0}$ if $y\ne x$,
  \item if $t=\frac{p}{4}x+(1-\frac{p}{4})y$, $c$ can be moreover such that $c(i)= c(i+\frac{p}{2})={\tt1}$ for all even $i\in\{0,\ldots,\frac{p}{2}-1\}$ and $ c(i)\ne c(i+\frac{p}{2})$ for all odd $i\in\{0,\ldots,\frac{p}{2}-1\}$ (up to a $1$-rotation).
  \end{itemize}

\end{lemma}
Using all the previous lemmas, we can now prove our main theorem.

\begin{theorem}\label{thm:types}
  Let $ c$ be a non-trivial constant $2$-labelling of a cycle $\mathcal{C}_p$ of Type 0, Type1mod, Type2mod, Type3mod or Type4mod with 
  $A=\{\mathcal{R}_k\mid k\in \mathbb{Z}\}$ and $v=0$.
  Let $a=\sum_{\{u\in V\mid c\circ\xi(u)={\tt1}\}} w(u)$ and 
  $b= \sum_{\{u\in V\mid c\circ\xi'(u)={\tt1}\}} w(u)$ for $\xi\in A_{\tt 1}, \xi'\in A_{\tt 0}$. 
 Then the possible values of the constants $a$ and $b$ are given in the following table.
  \begin{center}
  {\small
  \cellspacetoplimit3pt
  \cellspacebottomlimit3pt

\begin{tabular}{Sc| Sc Sc Sl}
  Type& Value of $a$& Value of $b$& Condition on parameters\\ \hline
  0&$\alpha x+z$& $(\alpha+1)x$& $\alpha\in\{0,\ldots,p-2\}$\\ \hline
  1mod&$\frac{p}{3}x+(\frac{p}{3}-1)y+z$ &$(\frac{p}{3}-1)x+(\frac{p}{3}+1)y$ & $p\equiv 0\pmod{3}$\\ \hline
  3mod& $\frac{p}{3}x+(\frac{p}{3}-1)y+z$ &$(\frac{p}{3}+1)x+(\frac{p}{3}-1)y$ & $p\equiv 0\pmod{3}$\\ \hline
  2mod& $(\frac{p}{2}-1)y+z$& $(\frac{p}{2}-1)x+t$&\\
  & $\alpha(x+y)+t+z$ &$(\alpha+1)(x+y)$  &$\alpha\in\{0,\ldots,\frac{p}{2}-1\}$\\ \hline
  4mod &$(\frac{p}{2}-2)y+z+t$  & $\frac{p}{2}x$&\\
  & $(2\alpha+2)x+2\alpha y+z+t$& $(2\alpha+2)(x+y)$&$\alpha\in\{0,\ldots,\frac{p}{4}-1\}$\\
  &$\frac{p}{4}x+(\frac{p}{4}-1)y+z$&$\frac{p}{4}x+(\frac{p}{4}-1)y+t$&\\
  &$\frac{p}{2}x+(\frac{p}{4}-1)y+z$ &$\frac{3p}{4}x$&$t=\frac{p}{4}x+(1-\frac{p}{4})y$\\
  &$(\frac{p}{4}-1)y+z$ &$\frac{p}{4}x$&$t=\frac{p}{4}x+(1-\frac{p}{4})y$\\
  &$2\alpha x+t+z$&$2(\alpha+1)x$&$\alpha\in\{0,\ldots,\frac{p}{2}-2\}$, $x=y$\\
  &$(\alpha+1)x+\alpha y+z$ &$(\alpha+1)(x+y)$ &$\alpha\in\{0,\ldots,\frac{p}{2}-2\}$, $t=y$\\
  \end{tabular}
  }
  \end{center}
 \end{theorem}

\section{Projection and folding method}\label{sec:projection_folding}

In this section, we present a method that allows us to translate specific colouring problems of the infinite grid in terms of constant $2$-labellings of weighted cycles. We first give the instance  of such problems. Let $t$ and $p$ be integers and let $\mathbf{t}=(t,1)$, $\mathbf{p}=(p,0)$. A \emph{frame} is a set of vertices of a given shape 
where one of the vertices plays a special role and therefore is called the \emph{center of the frame}. Let $a$ and $b$ be non-negative integers.
We consider the problem of deciding whether there exists a $2$-colouring $c$ of the infinite grid such that the colouring is periodic with $c(\mathbf{y}+\mathbf{t})=c(\mathbf{y})=c(\mathbf{y}+\mathbf{p})$ for any $\mathbf{y}\in\mathbb{Z}^2$, and that each frame contains
\begin{itemize}
\item $a$ black vertices if the center of the frame is black,
\item $b$ black vertices if the center of the frame is white.
\end{itemize}
Clearly, if the frames are the balls of radius $r$, then the problem is the same as determining if there exists an $(r,a,b)$-covering code of the infinite grid that is periodic of periods $\mathbf{t}$ and $\mathbf{p}$.

Now, for $\mathbf{t}=(t,1)$, $\mathbf{p}=(p,0)$, consider a $2$-colouring of $\mathbb{Z}^2$ that is periodic of periods $\mathbf{t}$ and $\mathbf{p}$. 
Since $c$ is periodic of period $\mathbf{t}$, the colouring of a line is obtained by doing a translation $\mathbf{t}=(t,1)$ (respectively -$\mathbf{t}=(-t,-1)$) of the colouring of the line below (resp. above). In this case, if we know the colouring of one line and the translation $\mathbf{t}$, then the colouring of the whole grid $\mathbb{Z}^2$ is known.

\subsection*{Projection}

Let $\mathbf{y}\in\mathbb{Z}^2$. Using the translation $\mathbf{t}=(t,1)$, we can project the frame with center $\mathbf{y}$ on the line $L$ containing $\mathbf{y}$. We assume $\mathbf{y}=(0,0)$ to simplify the notation. Let $Trans$ denote the set of all the translated frames of the frame with center $\mathbf{y}$ by a multiple of $\mathbf{t}$. Let $h:L\to\mathbb{N}$ be a map defined by
$$h((i,0))=\#\{ T\in Trans\mid (i,0)\in T\}.$$
The image of the line $L$ by the mapping $h$, denoted by $h(L)$, is called the \emph{projection} of the frame with center $\mathbf{y}$ with translation  $\mathbf{t}=(t,1)$. An example is given in the following section.
Observe that $h((i,0))$ is a finite number for any $i\in\mathbb{N}$, $h$ takes a non-zero value only finitely many times and the number of vertices of a frame is equal to $\sum_{i\in\mathbb{Z}}h((i,0))$. The map $h$ is introduced to count the number of occurrences in the frame with center $\mathbf{y}$ of vertices of $L$, up to translation $\mathbf{t}$.  

\subsection*{Folding}

Using the translation $(p,0)$, we can fold a projection on a cycle of $p$ weighted vertices. Let $L$ be the 
line containing $\mathbf{y}=(0,0)$ and $\{0,\ldots,p-1\}$ be the set of vertices of the cycle $\mathcal{C}_p$. 
We define a map $w: \{0,\ldots,p-1\}\to\mathbb{N}$ such that, for $i\in\{0,\ldots,p-1\}$, 
$$w(i):=\sum_{k\in\mathbb{Z}}h((i+kp,0)).$$
The \emph{folding} of the projection $h(L)$ is the cycle $\mathcal{C}_p$ with vertices $0,\ldots,p-1$ 
of respective weights $w(0),\ldots,w(p-1)$.

\section{Application to $(r,a,b)$-codes of $\mathbb{Z}^2$}\label{sec:application_rabc}

The projection and folding method can be used to find $(r,a,b)$-codes that are periodic. We first give an example, then we characterize the values of $a$ and $b$ of any $(r,a,b)$-code with $r\ge2$ and $|a-b|>4$.

\begin{example}
Consider frames that are balls of radius $r=3$ and set $t=2$, $p=4$. Using the projection and folding method, there exists an $(r,a,b)$-code of the infinite grid that is periodic of periods $\mathbf{t}=(2,1)$ and $\mathbf{p}=(4,0)$ if and only there exists a constant $2$-labelling of the cycle $\mathcal{C}_4$ with weights $w(0)=7,w(1)=w(2)=w(3)=6$. For instance, the colouring $c$ defined by $c(0)=c(1)={\tt 1}$ and $c(2)=c(3)={\tt 0}$ is a constant $2$-labelling. Hence, there exists an $(3,13,12)$-code of $\mathbb{Z}^2$, which is given at the bottom of Figure~\ref{fig:example_projection_folding}.

\begin{figure}[h!tbp]
\begin{center}
\includegraphics[scale=1]{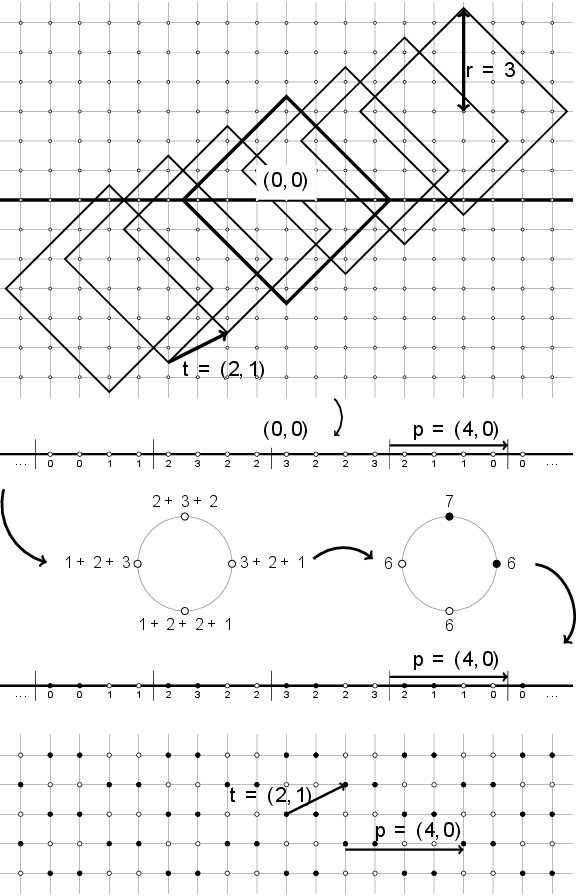}
\end{center}
\caption{Projection and folding of a ball of radius $3$ with respect to the translations $\mathbf{t}=(2,1)$ and $\mathbf{p}=(4,0)$.}\label{fig:example_projection_folding}
\end{figure}
\end{example}

Let $r\ge2$ and $a,b\in\mathbb{N}$ such that $|a-b|>4$. Let $c$ be an $(r,a,b)$-code of $\mathbb{Z}^2$.
By Theorem~\ref{thm_Axe}, $ c$ is a diagonal colouring. Hence, $ c$ is determined 
by the colouring of any horizontal line, e.g. $\{(x_1,0)\mid x_1\in \mathbb{Z}\}$, and by the orientation 
of the monochromatic diagonals in the even and odd sublattices. 

Assume first that the monochromatic diagonals are all parallel. Without loss of generality, 
we can suppose that they are of the type $\{(x_1,x_1+c)\mid x_1\in\mathbb{Z}\}$ with $c\in\mathbb{Z}$.
Indeed, the case where the monochromatic diagonals are of type $\{(x_1,-x_1+c)\mid x_1\in\mathbb{Z}\}$ is similar
since the grid is symmetric.
In this case, if the colouring of a line of $\mathbb{Z}^2$ is known, then the colouring of the line above (resp. below) is 
obtained by doing a translation $\mathbf{t}=(1,1)$ (resp. $-\mathbf{t}$) as 
$ c(\mathbf{x})= c(\mathbf{x}+\mathbf{t})$ for all $\mathbf{x}\in\mathbb{Z}^2$.
So we can apply the projection method.
Moreover, by Theorem~\ref{thm_Axe}, $ c$ is such that $ c(\mathbf{x}+(m,0))= c(\mathbf{x})$
for some $m\in\mathbb{N}$ and all $\mathbf{x}\in\mathbb{Z}^2$. Hence, it is possible to apply the folding method.

Now assume that the monochromatic diagonals are not parallel. We may suppose that the even (resp. odd) 
sublattice is the union of monochromatic diagonals of type $\{(x_1,x_1+c)\mid x_1\in\mathbb{Z}\}$ 
(resp. $\{(x_1,-x_1+c)\mid x_1\in\mathbb{Z}\}$) with $c\in\mathbb{Z}$. We consider an $r$-ball $B_r(\mathbf{y})$ with center $\mathbf{y}$. Observe that a diagonal intersecting the ball contains either $r$ or $r+1$ elements of the ball. 
Moreover two intersecting diagonals belong to 
the same sublattice. Hence, in terms of counting vertices of a particular colour appearing in the ball, it is equivalent 
to consider monochromatic diagonals that are parallel or not. So, we can apply the folding method in both cases.

Therefore, for $r\ge2$ and $|a-b|>4$, there exists an $(r,a,b)$-code of the infinite grid $\mathbb{Z}^2$ if and only if 
there exists a constant $2$-labelling of some cycle $\mathcal{C}_p$, with $v=0$, $A=\{\mathcal{R}_k\mid k\in\mathbb{Z}\}$
and the mapping $w$ defined as before, such that
$$a=\sum_{\{u\in V\mid  c\circ\xi(u)={\tt1}\}}w(u) \text{ and } b= \sum_{\{u\in V\mid c\circ\xi'(u)={\tt1}\}} w(u) 
\quad \forall \xi\in A_{\tt 1},\; \xi'\in A_{\tt 0}.$$

\subsection{Characterization of $(r,a,b)$-codes of $\mathbb{Z}^2$ with $|a-b|>4$ and $r\ge2$}
\renewcommand{\arraystretch}{1.5}
\begin{theorem}\label{thm:codes}
 Let $r, a, b\in\mathbb{N}$ such that $|a-b|>4$ and $r\ge2$. If there exists an $(r,a,b)$-code of $\mathbb{Z}^2$, then the values of $a$ and $b$ are given in the following table   \\
  \begin{tabular}{ccc}
 $a$&$b$& Condition on parameters\\ \hline
 $r+1+\alpha(2r+1)$					& $(\alpha+1)(2r+1)$	&\small{$\alpha\in\{0,\ldots,r-1\}$, $r\equiv0\pmod{2}$}\\
 $(r+1)^2-\alpha(\frac{3r}{2}+1)$	& $r^2+\alpha(\frac{3r}{2}+1)$	& \small{$\alpha\in\{0,1\}$, $r\equiv0\pmod{2}$}\\
 $r+1+(\alpha+1)(2r+1)$				& $(\alpha+1)(2r+1)$	&\small{$\alpha\in\{0,\ldots,r-2\}$, $r\equiv1\pmod{2}$}\\
 $r^2+\alpha\frac{3r+1}{2}$ 		& $(r+1)^2-\alpha\frac{3r+1}{2}$	& \small{$\alpha\in\{0,1\}$, $r\equiv1\pmod{2}$}\\
 $(\alpha+1)\frac{2r^2+2r+2}{3}-1$ 	& $(\alpha+1)\frac{2r^2+2r+2}{3}$ 	& \small{$\alpha\in\{0,1\}$, $r\equiv1\pmod{3}$}\\
 $(\alpha+1)\frac{2r^2+2r}{3}-\frac{r+1}{3}+1$ & $(\alpha+1)\frac{2r^2+2r}{3}+\frac{r+1}{3}$ & \small{$\alpha\in\{0,1\}$, $r\equiv2\pmod{3}$}\\
 $(\alpha+1)\frac{2r^2+2r}{3}+\frac{r}{3}-1$ & $(\alpha+1)\frac{2r^2+2r}{3}-\frac{r}{3}$ &\small{ $\alpha\in\{0,1\}$, $r\equiv0\pmod{3}$}\\
\end{tabular}
\end{theorem}
\renewcommand{\arraystretch}{1}

\begin{proof}
For $r\ge2$ and $|a-b|>4$, Axenovich described all possible $(r,a,b)$-codes (see Theorem~\ref{thm_Axe}) in terms of diagonal colourings. Theorem~\ref{thm_Axe} allows us to apply the projection and folding method in this case.
Let $\mathbf{y}=(0,0)$. We project the ball $B_r(\mathbf{y})$ on the line $L$ using the translation $\mathbf{t}=(1,1)$ and we obtain for an even radius $r$
\begin{equation*}
h((i,0))=\left\{
\begin{array}{l l}
r &\text{if }i\le r\text{ and }i\text{ is odd}\\
r+1 &\text{if }i\le r\text{ and }i\text{ is even}\\
0 & \text{otherwise} 
\end{array}\right.
\end{equation*}
and for an odd radius $r$
\begin{equation*}
h((i,0))=\left\{
\begin{array}{l l}
r+1 &\text{if }i\le r\text{ and }i\text{ is odd}\\
r &\text{if }i\le r\text{ and }i\text{ is even}\\
0 & \text{otherwise.} 
\end{array}\right.
\end{equation*}
Indeed, if $r$ is even, then any diagonal of the even (respectively odd) sublattice 
intersecting the ball contains $r+1$ (resp. $r$) elements of $B_r(\mathbf{y})$.
The other case can be treated similarly. 

\begin{table}
  \begin{center}
  \begin{tabular}{c|c|c}
  &For $r$ even & For $r$ odd\\ \hline
  &Type 0: $p=r+1$ &  Type 0: $p=r$\\
   \begin{tikzpicture}[scale=1]
  	\node at (0,0) {\rotatebox{90}{Colouring 1}};
\end{tikzpicture}&\begin{tikzpicture}[scale=1]
	  \node at (90:1.3) {${r+1}$};
	  \path[draw,help lines] (0,0) circle (1 cm);
	  \tikzstyle{trait}=[dashed]
	  \draw[trait] (-215:1.1) arc (-215:35:1.1cm);
	  \tikzstyle{every node}=[shape=circle,fill=white,draw=black,minimum size=0.5pt,inner sep=1.5pt]
	  \tikzstyle{empty}=[fill=none,draw=none,minimum size=0.5pt,inner sep=1.5pt]
	  \node at (60:1)[label=right:${2r+1}$] {};
	  \node at (90:1) {};
	  \node at (120:1)[label=left:${2r+1}$] {};
	  \node[empty] at (270:1) [label=below:\phantom{t}] {};
  \end{tikzpicture}
  &
    \begin{tikzpicture}[scale=1]
	  \node at (90:1.3) {${3r+2}$};
	  \path[draw,help lines] (0,0) circle (1 cm);
	  \tikzstyle{trait}=[dashed]
	  \draw[trait] (-215:1.1) arc (-215:35:1.1cm);
	  \tikzstyle{every node}=[shape=circle,fill=white,draw=black,minimum size=0.5pt,inner sep=1.5pt]
	  \tikzstyle{empty}=[fill=none,draw=none,minimum size=0.5pt,inner sep=1.5pt]
	  \node at (60:1)[label=right:${2r+1}$] {};
	  \node at (90:1) {};
	  \node at (120:1)[label=left:${2r+1}$] {};
	  \node[empty] at (270:1) [label=below:\phantom{t}] {};
  \end{tikzpicture}\\
\hline
& Type4mod: $p=2r$ &Type4mod: $p=2(r+1)$ \\
  \begin{tikzpicture}[scale=1]
  	\node at (0,0) {\rotatebox{90}{Colouring 2}};
\end{tikzpicture}
&\begin{tikzpicture}[scale=1]
	  \node at (90:1.3) {${r+1}$};
    	  \node at (270:1.3) {${2(r+1)}$};
	  \path[draw,help lines] (0,0) circle (1 cm);
	  \tikzstyle{trait}=[dashed]
	  \draw[trait] (165:1.1) arc (165:215:1.1cm);
	  \draw[trait] (-35:1.1) arc (-35:15:1.1cm);
	  \tikzstyle{every node}=[shape=circle,fill=white,draw=black,minimum size=0.5pt,inner sep=1.5pt]
	  \node at (30:1)[label=right:${r+1}$] {};
	  \node at (60:1)[label=right:$r$] {};
	  \node at (90:1) {};
	  \node at (120:1)[label=left:$r$] {};
	  \node at (150:1)[label=left:${r+1}$] {};
	  \node at (240:1) [label=left:$r$]{};
	  \node at (270:1) {};
	  \node at (300:1) [label=right:$r$]{};
  \end{tikzpicture} &
  \begin{tikzpicture}[scale=1]
  	  \node at (90:1.3) {$r$};
  	  \node at (270:1.3) {$0$};
	  \path[draw,help lines] (0,0) circle (1 cm);
	  \tikzstyle{trait}=[dashed]
	  \draw[trait] (165:1.1) arc (165:215:1.1cm);
	  \draw[trait] (-35:1.1) arc (-35:15:1.1cm);
	  \tikzstyle{every node}=[shape=circle,fill=white,draw=black,minimum size=0.5pt,inner sep=1.5pt]
	  \node at (30:1)[label=right:$r$] {};
	  \node at (60:1)[label=right:${r+1}$] {};
	  \node at (90:1) {};
	  \node at (120:1)[label=left:${r+1}$] {};
	  \node at (150:1)[label=left:$r$] {};
	  \node at (240:1) [label=left:${r+1}$]{};
	  \node at (270:1) {};
	  \node at (300:1) [label=right:${r+1}$]{};
  \end{tikzpicture}\\ \hline
  &Type2mod or Type4mod: $p=r$   & Type2mod or Type4mod: $p=r+1$  \\
  \begin{tikzpicture}[scale=1]
  	\node at (0,0) {\rotatebox{90}{Colouring 3}};
\end{tikzpicture}
&\begin{tikzpicture}[scale=1]
	  \node at (90:1.3) {${3(r+1)}$};
	  \path[draw,help lines] (0,0) circle (1 cm);
	  \tikzstyle{trait}=[dashed]
	  \draw[trait] (-175:1.1) arc (-175:20:1.1cm);
	  \tikzstyle{every node}=[shape=circle,fill=white,draw=black,minimum size=0.5pt,inner sep=1.5pt]
	  \tikzstyle{empty}=[fill=none,draw=none,minimum size=0.5pt,inner sep=1.5pt]
	  \node at (30:1)[label=right:${2(r+1)}$] {};
	  \node at (60:1)[label=right:$2r$] {};
	  \node at (90:1) {};
	  \node at (120:1)[label=left:$2r$] {};
	  \node at (150:1) [label=left:${2(r+1)}$]{};
	  \node at (175:1) [label=left:$2r$]{};
	  \node[empty] at (270:1) [label=below:\phantom{t}] {};
  \end{tikzpicture}&
  \begin{tikzpicture}[scale=1]
  	  \node at (90:1.3) {$r$};
	  \path[draw,help lines] (0,0) circle (1 cm);
	  \tikzstyle{trait}=[dashed]
	  \draw[trait] (-175:1.1) arc (-175:20:1.1cm);
	  \tikzstyle{every node}=[shape=circle,fill=white,draw=black,minimum size=0.5pt,inner sep=1.5pt]
	  \tikzstyle{empty}=[fill=none,draw=none,minimum size=0.5pt,inner sep=1.5pt]
	  \node at (30:1)[label=right:$2r$] {};
	  \node at (60:1)[label=right:${2(r+1)}$] {};
	  \node at (90:1) {};
	  \node at (120:1)[label=left:${2(r+1)}$] {};
	  \node at (150:1) [label=left:$2r$]{};
	  \node at (175:1) [label=left:$2r$]{};
	  \node[empty] at (270:1) [label=below:\phantom{t}] {};
  \end{tikzpicture}\\\hline
&Type1mod: $p=2r+1$ &Type3mod: $p=2r+1$\\
  \begin{tikzpicture}[scale=1]
  	\node at (0,0) {\rotatebox{90}{Colouring 4}};
\end{tikzpicture} & \begin{tikzpicture}[scale=1]
	  \node at (90:1.3) {${r+1}$};
	  \node at (247:1.4) {${r+1}$};
	  \node at (293:1.4) {${r+1}$}; 
	  \path[draw,help lines] (0,0) circle (1 cm);
	  \tikzstyle{trait}=[dashed]
	  \draw[trait] (165:1.1) arc (165:210:1.1cm);
	  \draw[trait] (-30:1.1) arc (-30:15:1.1cm);
	  \tikzstyle{every node}=[shape=circle,fill=white,draw=black,minimum size=0.5pt,inner sep=1.5pt]
	  \node at (30:1)[label=right:${r+1}$] {};
	  \node at (60:1)[label=right:$r$] {};
	  \node at (90:1) {};
	  \node at (120:1)[label=left:$r$] {};
	  \node at (150:1) [label=left:${r+1}$]{};
	  \node at (225:1) [label=left:$r$]{};
	  \node at (255:1) {};
	  \node at (285:1) {};
	  \node at (315:1) [label=right:$r$]{};
  \end{tikzpicture}
 &\begin{tikzpicture}[scale=1]
	  \node at (90:1.3) {$r$};
	  \node at (247:1.4) {${r+1}$};
	  \node at (293:1.4) {${r+1}$};    
	  \path[draw,help lines] (0,0) circle (1 cm);
	  \tikzstyle{trait}=[dashed]
	  \draw[trait] (165:1.1) arc (165:240:1.1cm);
	  \draw[trait] (-60:1.1) arc (-60:15:1.1cm);
	  \tikzstyle{every node}=[shape=circle,fill=white,draw=black,minimum size=0.5pt,inner sep=1.5pt]
	  \node at (30:1)[label=right:$r$] {};
	  \node at (60:1)[label=right:${r+1}$] {};
	  \node at (90:1) {};
	  \node at (120:1)[label=left:${r+1}$] {};
	  \node at (150:1) [label=left:$r$]{};
	  \node at (255:1) {};
	  \node at (285:1) {};
  \end{tikzpicture}
  \end{tabular}
  \end{center}
  \caption{Weighted cycles $\mathcal{C}_p$ corresponding to the colourings 1--4.}\label{fig_colourings}
\end{table}

\medskip

Consider now the colourings 1--5 given in Theorem~\ref{thm_Axe}. For each kind of colouring, we fold the projection of $B_r(\mathbf{y})$ on a cycle $\mathcal{C}_p$, with $p\in\{2,3,r,r+1,2r,2r+1,2r+2\}$, according to the parity of $r$ (see Table~\ref{fig_colourings}). Then we use Theorem~\ref{thm:types} to give the possible values of the constant weighted sums $a$ and $b$.

The colouring 1 is $p$-periodic of odd period $p\in\{r,r+1\}$. Hence it gives two different weighted cycles. If $r$ is even, then $B_r(\mathbf{y})$ is projected and folded on the cycle $\mathcal{C}_{r+1}$ of Type 0 with $z=r+1$ and $x=2r+1$. The corresponding values of the constants are then
$$a=r+1+\alpha(2r+1)\text{ and }b=(\alpha+1)(2r+1)$$
with $\alpha\in\{0,\ldots,r-1\}$. If $r$ is  odd, $B_r(\mathbf{y})$ is projected and folded  on the cycle $\mathcal{C}_{r}$ of Type 0 with $z=3r+2$  and $x=2r+1$. So the corresponding values of the constants are $$a=3r+2+\alpha(2r+1)\text{ and }b=(\alpha+1)(2r+1)$$ with $\alpha\in\{0,\ldots,r-2\}$.

The colouring 2 is a $p$-anti-periodic colouring with $p\in\{r,r+1\}$ and $p$ even. It gives then two different weighted cycles with $2p$ vertices. If $r$ is even,  $B_r(\mathbf{y})$ is projected and folded  on the cycle $\mathcal{C}_{2r}$ 
of Type4mod with $z=r+1=y$, $x=r$ and $t=2r+2$. Then the corresponding values of the constants are 
\begin{align*}
a&=\frac{2r}{4}r+(\frac{2r}{4}-1)(r+1)+r+1=(r+1)^2+(\frac{3r}{2}+1),\\
b&=\frac{2r}{4}r+(\frac{2r}{4}-1)(r+1)+2(r+1)=r^2+(\frac{3r}{2}+1).
\end{align*}
If $r$ is odd,  $B_r(\mathbf{y})$ is projected and folded  on the cycle $\mathcal{C}_{2r+2}$ of Type4mod with $z=r=y$, $x=r+1$ and $t=0$. The corresponding values are
\begin{align*}
a&=\frac{2r+2}{4}(r+1)+\frac{2r+2}{4}r=r^2+\frac{3r+1}{2}, \\
b&=\frac{2r+2}{4}(r+1)+(\frac{2r+2}{4}-1)r=(r+1)^2+\frac{3r+1}{2}.
\end{align*}

The colouring 3 is $p$-periodic of period $p\in\{r,r+1\}$ with $p$ even. If $r$ is even, $B_r(\mathbf{y})$ is projected and folded on the cycle $\mathcal{C}_{r}$. This cycle is a particular case of a Type2mod with $t=x$ or of a Type4mod with $t=y$, according to the value of $r\bmod{4}$. So $\mathcal{C}_r$ is represented by $z(xy)^{\frac{r-2}{2}}x$ with $z=3(r+1), x=2r$ and $y=2(r+1)$. The corresponding values of the constants are either
$a=(r+1)^2$ and $b=r^2$, or
$$a=2(\alpha+1)r +2\alpha (r+1)+3(r+1)\text{ and }b= 2(\alpha+1)(2r+1)$$
with $\alpha\in\{0,\ldots,\frac{r}{2}-2\}$.
Similarly, if $r$ is odd, $B_r(\mathbf{y})$ is projected and folded on the cycle $\mathcal{C}_{r+1}$ which is a particular case of a Type2mod or a Type4mod cycle, represented by $z(xy)^{\frac{r-1}{2}}x$ with $z=r, x=2(r+1)$ and $y=2r$. The corresponding values of the constants are either $a=r^2$ and $b=(r+1)^2$ or
$$a=2(\alpha+1)(r+1)+2\alpha r+r\text{ and }b=2(\alpha+1)(2r+1)$$
with $\alpha\in\{0,\ldots,\frac{r-1}{2}-1\}$.

The colouring 4 is $2r+1$-periodic. If $r$ is even, $B_r(\mathbf{y})$ is projected and folded on the cycle $\mathcal{C}_{2r+1}$ of Type1mod with $z=r+1=y$ and $x=r$. Such weighted cycle has a constant $2$-labelling if $2r+1\equiv0\pmod{3}$. Then the corresponding values of the constants are
\begin{align*}
a&= \frac{2r+1}{3}\cdot r+\left(\frac{2r+1}{3}-1\right)(r+1)+r+1=\frac{(2r+1)^2}{3}\\
b&=\left(\frac{2r+1}{3}-1\right)r+\left(\frac{2r+1}{3}+1\right)(r+1)=\frac{(2r+1)^2}{3}+1.
\end{align*}
If $r$ is odd, $B_r(\mathbf{y})$ is projected and folded on the cycle $\mathcal{C}_{2r+1}$ of Type3mod with $z=r=y$ and $x=r+1$. Hence, under the condition that $2r+1\equiv0\pmod{3}$, the corresponding values of the constants are 
\begin{align*}
a&= \frac{2r+1}{3}(r+1)+\left(\frac{2r+1}{3}-1\right)r+r+1=\frac{(2r+1)^2}{3}+1\\
b&=\left(\frac{2r+1}{3}+1\right)(r+1)+\left(\frac{2r+1}{3}-1\right)r=\frac{(2r+1)^2}{3}+1.
\end{align*}
Since the difference $|a-b|\le4$ in this case, we never obtain these values of $a$ and $b$.

The colouring 5 is either $2$-periodic or $3$-periodic. Hence it gives five different weighted cycles. Let $c$ be the colouring 5. If $c$ is $2$-periodic, then $B_r(\mathbf{y})$ is projected and folded on $\mathcal{C}_{2}$ 
of Type 0, represented by $zx$ with
$$\left\{\begin{array}{ll}
z=(r+1)^2, x=r^2 &\text{ for $r$ even}\\
z=r^2, x=(r+1)^2 &\text{ for $r$ odd.}
\end{array}\right.$$
So the corresponding values of the constants are $a=(r+1)^2$ and $b=r^2$ for $r$ even, and $a=r^2, b=(r+1)^2$ for $r$ odd.
If $c$ is $3$-periodic, then $B_r(\mathbf{y})$ is projected and folded on $\mathcal{C}_{3}$ of Type 0. 
In that case, straightforward analysis give the weights $z$ and $x$:
\begin{itemize}
 \item $z=\frac{2r^2+2r-1}{3}$ and $x=\frac{2r^2+2r+2}{3}$ if $r=3k+1$, 
 \item $z=\frac{2r^2+2r}{3}-2k+1$ and $x=\frac{2r^2+2r}{3}+k$ if $r=3k-1$,
 \item $z=\frac{2r^2+2r}{3}+2k+1$ and $x=\frac{2r^2+2r}{3}-k$ if $r=3k$.
\end{itemize}
The corresponding values of the constants are then given by
$$a=\alpha x+z\text{ and }b=(\alpha+1)x\text{ with }\alpha\in\{0,1\}.$$
This concludes the proof of Theorem~\ref{thm:codes}.
\end{proof}

\section{Conclusions and perspectives}


Constant $2$-labellings in weighted cycles allows us to translate the periodicity of $(r,a,b)$-codes, with $r\ge2$, of the $2$-dimensional grid. It seems that for a radius $1$, many $(1,a,b)$-codes of the multidimensional grid $\mathbb{Z}^d$ are periodic (see Theorem~\ref{thm:dorbec} and ~\cite{Dorbec--Gravier--Honkala--Mollard--2009}). It would be interesting to find the corresponding weighted graphs obtained with our projection and folding method and then to study constant $2$-labellings in these graphs.  
Also, the projection and folding method is presented in general and can be applied to linear codes. It would be interesting to consider $(r,a,b)$-codes in other types of lattices as for example, in the king lattice.

The problem of finding a constant $2$-labelling of a graph is interesting in and of itself. In Theorem~\ref{thm:types}, we only obtain a characterization of constant $2$-labellings in four types of weighted cycles. It would be interesting to consider different weighted cycles, with eventually more weights. Moreover, we could study constant $2$-labelling in graphs having a big automorphisms group, for instance, in circulant graphs or in vertex-transitive graphs. Finally, we could find a natural generalization of constant $2$-labellings into constant $k$-labellings using $k$ colours and then consider their links with distinguishing numbers and weighted codes with more than two values.


\begin{thebibliography}{10}

\bibitem{Axenovich--2003}
M.~Axenovich.
\newblock On multiple coverings of the infinite rectangular grid with balls of
  constant radius.
\newblock {\em Discrete Math.}, 268(1-3):31--48, 2003.

\bibitem{Biggs--1973}
N.~Biggs.
\newblock Perfect codes in graphs.
\newblock {\em J. Combinatorial Theory Ser. B}, 15:289--296, 1973.

\bibitem{Cohen--Honkala--Litsyn--Lobstein--1997}
G.~Cohen, I.~Honkala, S.~Litsyn, and A.~Lobstein.
\newblock {\em Covering codes}, volume~54 of {\em North-Holland Mathematical
  Library}.
\newblock North-Holland Publishing Co., Amsterdam, 1997.

\bibitem{Cohen--Litsyn--Honkala--Mattson--1995}
G.~Cohen, I.~Honkala, S.~Litsyn, and H.~Mattson, Jr.
\newblock Weighted coverings and packings.
\newblock {\em IEEE Trans. Inform. Theory}, 41(6, part 2):1856--1867, 1995.

\bibitem{Dorbec--Gravier--Honkala--Mollard--2009}
P.~Dorbec, S.~Gravier, I.~Honkala, and M.~Mollard.
\newblock Weighted codes in {L}ee metrics.
\newblock {\em Des. Codes Cryptogr.}, 52(2):209--218, 2009.

\bibitem{Godsil--1993}
C.~D. Godsil.
\newblock {\em Algebraic combinatorics}.
\newblock Chapman and Hall Mathematics Series. Chapman \& Hall, New York, 1993.

\bibitem{Golomb--Welch--1968}
S.~W. Golomb and L.~R. Welch.
\newblock Algebraic coding and the {L}ee metric.
\newblock In {\em Error {C}orrecting {C}odes ({P}roc. {S}ympos. {M}ath. {R}es.
  {C}enter, {M}adison, {W}is., 1968)}, pages 175--194. John Wiley, New York,
  1968.

\bibitem{Golomb--Welch--1970}
S.~W. Golomb and L.~R. Welch.
\newblock Perfect codes in the {L}ee metric and the packing of polyominoes.
\newblock {\em SIAM J. Appl. Math.}, 18:302--317, 1970.

\bibitem{Gravier--Lacroix--Slimani--2013}
S.~Gravier, A.~Lacroix, and S.~Slimani.
\newblock {$(a,b)$}-codes in {$\Bbb{Z}/n\Bbb{Z}$}.
\newblock {\em Discrete Appl. Math.}, 161(4-5):612--617, 2013.

\bibitem{Gravier--Mollard--Payan--1999}
S.~Gravier, M.~Mollard, and C.~Payan.
\newblock Variations on tilings in the {M}anhattan metric.
\newblock {\em Geom. Dedicata}, 76(3):265--273, 1999.

\bibitem{Horak--2009}
P.~Horak.
\newblock On perfect {L}ee codes.
\newblock {\em Discrete Math.}, 309(18):5551--5561, 2009.

\bibitem{Kratochvil--1988}
J.~Kratochv{\'{\i}}l.
\newblock Perfect codes in general graphs.
\newblock In {\em Combinatorics ({E}ger, 1987)}, volume~52 of {\em Colloq.
  Math. Soc. J\'anos Bolyai}, pages 357--364. North-Holland, Amsterdam, 1988.

\bibitem{Puzynina--2004}
S.~Puzynina.
\newblock Periodicity of perfect colorings of an infinite rectangular grid.
\newblock {\em Diskretn. Anal. Issled. Oper. Ser. 1}, 11(1):79--92, 2004.

\bibitem{Puzynina--2009}
S.~Puzynina.
\newblock On periodicity of generalized two-dimensional infinite words.
\newblock {\em Inform. and Comput.}, 207(11):1315--1328, 2009.

\bibitem{Telle--1994}
J.~A. Telle.
\newblock Complexity of domination-type problems in graphs.
\newblock {\em Nordic J. Comput.}, 1(1):157--171, 1994.

\bibitem{Vandomme--thesis}
E.~Vandomme.
\newblock {\em Contributions to combinatorics on words in an abelian context
  and covering problems in graphs}.
\newblock PhD thesis, University of Grenoble and University of Liege, 2015.

\end{thebibliography}

\end{document}